\documentclass[10pt]{amsart}

\usepackage[utf8]{inputenc}
\usepackage{amsmath,amsthm,amsfonts,amssymb,mathrsfs}
\usepackage[foot]{amsaddr}
\usepackage{hyperref}
\usepackage[margin=1.45in]{geometry}
\usepackage{todonotes,color}
\usepackage{algorithmic,natbib}
\usepackage{mathrsfs}
\usepackage[section]{algorithm}
\usepackage{enumitem}
\usepackage{parskip}
\setlength{\parskip}{4pt}
\usepackage{comment}
\usepackage[T1]{fontenc}
\newlist{steps}{enumerate}{1}
\setlist[steps, 1]{label = Step \arabic*:}
\pagestyle{plain}
\usepackage{tikz}
\usepackage{pgfplots}
\pgfplotsset{compat=1.17} 
\usepackage[toc,page]{appendix}

\allowdisplaybreaks
\def\R{\mathbb{R}}

\def\H{\mathbb{H}}
\def\SO{\mathrm{SO}}
\def\S{\mathbb{S}}

\def\P{\mathcal{P}}
\newcommand{\diff}{\text{d}}
\newcommand{\E}{\mathbb{E}}
\newcommand{\dist}{\mathrm{dist}}
\newcommand{\op}{\text{op}}

\newcommand{\Var}{\mathrm{Var}}

\newcommand{\vvec}{\mathsf{vec}}
\newcommand{\halfvec}{\mathsf{hvec}}

\newcommand{\Exp}{\mathrm{exp}}
\newcommand\numberthis{\addtocounter{equation}{1}\tag{\theequation}}

\def\dvol{\mathrm{dvol}_g}
\def\Hess{\mathrm{Hess}}
\def\Ric{\mathrm{Ric}}

\def\Tr{\mathrm{Tr}}

\def\det{\mathrm{det\;}}
\def\div{\mathrm{div}}

\renewcommand\subsubsection{\@startsection{subsubsection}{3}}

\theoremstyle{plain}
\newtheorem{theorem}{Theorem}[section]

\newtheorem{lemma}[theorem]{Lemma}

\theoremstyle{definition}
\newtheorem{definition}[theorem]{Definition}
\newtheorem{example}{Example}[section]
\newtheorem{remark}{Remark}
\numberwithin{equation}{section}
\theoremstyle{remark}
\newtheorem*{assumption}{Assumption}
\numberwithin{remark}{section}

\newcommand{\rf}[1]{{\color{black}  #1}} 
\makeatletter

\renewcommand\subsubsection{\@secnumfont}{\bfseries}%
\renewcommand\subsubsection{\@startsection{subsubsection}{3}
  \z@{.5\linespacing\@plus.7\linespacing}{-.5em}%
  {\itshape\bfseries}}
  
  \makeatother

\title{Sampling and estimation on manifolds using the Langevin diffusion}
\author{Karthik Bharath$^{*}$}
\author{Alexander Lewis$^{\dagger}$}
\author{Akash Sharma$^\ddagger$}
\author{Michael V. Tretyakov$^{*}$}
\address{$^{*}$ School of Mathematical Sciences, University of Nottingham,  UK}
\address{$^{\dagger}$ Institut f\"ur Mathematische Stochastik, Georg-August-Universit\"at G\"ottingen, Germany}
\address{$^{\ddagger}$ Department of Mathematical Sciences,
Chalmers University of Technology and the University of Gothenburg, Sweden}


\begin{document}
\date{}
\maketitle

\begin{abstract}
	Error bounds are derived for sampling and estimation using a discretization of an intrinsically defined Langevin diffusion with invariant measure $\diff\mu_\phi \propto e^{-\phi} \dvol $ on a compact Riemannian manifold.  Two estimators of linear functionals of $\mu_\phi $ based on the discretized Markov process are considered: a time-averaging estimator based on a single trajectory and an ensemble-averaging estimator based on multiple independent trajectories. Imposing no restrictions beyond a nominal level of smoothness on $\phi$, first-order error bounds, in discretization step size, on the bias and variance/mean-square error of both estimators are derived. The order of error matches the optimal rate in Euclidean and flat spaces, and leads to a first-order bound on distance between the invariant measure $\mu_\phi$ and a stationary measure of the discretized Markov process. This order is preserved even upon using retractions when exponential maps are unavailable in closed form, thus enhancing practicality of the proposed algorithms. Generality of the proof techniques, which exploit links between two partial differential equations and the semigroup of operators corresponding to the Langevin diffusion, renders them amenable for the study of a more general class of sampling algorithms related to the Langevin diffusion. 
Conditions for extending analysis to the case of non-compact manifolds are discussed. Numerical illustrations with distributions, log-concave and otherwise, on the manifolds of positive and negative curvature elucidate on the derived bounds and demonstrate practical utility of the sampling algorithm.
\end{abstract}

\keywords{{\bf Keywords:}  Stochastic differential equations on manifolds; Intrinsic Riemannian geometry; Weak approximation; Computing ergodic limits; Monte Carlo technique.}

\section{Introduction}
\label{sec:intro}
The Langevin algorithm for sampling from a target probability measure $  \diff \mu_{\phi} \propto e^{-\phi(x)} \diff x$ on $\R^{q}, q \geq 1$, is based on constructing a discrete time Markov process $\{X_n^h, n \geq 1\}$ with time-step size $h=T/\rf{N}>0$, $t_n=hn$, obtained by discretizing the ergodic Langevin diffusion 
\begin{equation}
	\label{eq:Langevin}
	\diff X(t)=-\frac{1}{2}\nabla \phi(X(t))\diff t+\diff B(t), \quad 0 \leq t \leq T,
\end{equation}
where $B(t)$ is the $q$-dimensional standard Brownian motion. Popular amongst the numerical methods for constructing $X_n^h$ is the (explicit) Euler method with Gaussian noise to approximate Brownian motion increments, under which various types of approximation errors in using $X_n^h$ in place of $X(t_n)$ have been ascertained \citep[e.g.,][]{talayinv,RT96,dalayan17, MT07,milstein2004stochastic}. The natural regime to examine error bounds for such algorithms is when step size $h \to 0$.  

The Langevin diffusion \eqref{eq:Langevin} has been successfully used for sampling from posterior distributions in Bayesian inference \citep[e.g.,][]{RT96,DM17}, for canonical-ensemble calculations in molecular dynamics \citep{leimkuhler_mathews_15} and  for approximating minimizers in convex/non-convex optimisation \citep[e.g.,][]{JMLR:v17:teh16a}. Recently, there has been growing interest in finding effective ways to sample from distributions on manifolds, and in providing corresponding theoretical guarantees \citep[e.g.,][]{girolami2011,Giro22,vempala2022convergence, sra2022,LE2023}. Our work aims to provide theoretical guarantees for an intrinsically-defined Langevin algorithm for sampling from a target probability measure on Riemannian manifolds. 

In a statistical context, when sampling and estimation are of primary interest, quantitative 
error bounds, in terms of the time-step $h$ and integration time $T$,  on two interconnected quantities related to distributional aspects of the approximations are relevant: bias of an estimator based on $X^h_n$ of the linear functional $\mu_\phi(\varphi):=\int \varphi \diff \mu_\phi$ over a class $\mathcal H$ of test functions $\varphi$; and an integral probability metric with respect to $\mathcal H$ between the stationary measure of $X_n^h$ and $\mu_\phi$. Such bounds are referred to as \emph{weak error bounds}. In statistical applications (e.g., Bayesian computing), the commonly used estimator of $\mu_\phi(\varphi)$ is based on a single trajectory of $X^h_n$ until a large time \rf{$T=Nh$}, known as the time-averaging estimator.  Under suitable conditions on $\phi$, when the law of $X(t)$ converges to its invariant measure $\mu_\phi$ as $t \to \infty$, it is known that for the Euler method the optimal rate at which bias of the time-averaging estimator disappears is $O(h)$ \citep{talayinv,MT07,MST10,milstein2004stochastic}; the first-order (in $h$) bound on the bias then results in a commensurate first-order bound on the integral probability metric. 

Modern statistical and machine learning applications have generated a growing need for extending the above program to a $q$-dimensional smooth Riemannian manifold $M$. 
To better understand the challenges, some context on the necessary ingredients is helpful. Broadly, \rf{the type of application engenders} two perspectives of, and hence coordinate systems on, $M$: (i) as a submanifold of $\R^k, k \geq q$, with embedded geometry inherited from that of $\R^k$; (ii) as a $q$-dimensional differentiable manifold equipped with an intrinsic geometry.  
These two points of view lead to different formulations of the Langevin diffusion on $M$ with different invariant measures. The focus of this article is on the intrinsic perspective, and on investigating a geodesic-based sampling algorithm built on an intrinsic Langevin diffusion. 

A Riemannian metric $g$ prescribes a notion of curvature of the manifold $M$ at a point and determines a unique intrinsic Brownian motion on it \citep[e.g.,][]{David82,hsu2002stochastic,Wang2013}. The  curvature of $M$ and behavior of the Brownian motion are intimately related: positive curvature makes curves come closer and thus prevents a Brownian motion from wandering away to infinity (non-explosion) while ensuring recurrence and fast convergence to ergodicity \citep[e.g.,][]{ichihara1982curvature1, ichihara1982curvature2}; this is true even for its approximation based on a geodesic random walk \citep{mangoubi2018rapid}. Negative curvature, on the other hand,  disperses curves and encourages Brownian motion to exit compact sets and drift away to infinity (explosion) while discouraging recurrence \citep[e.g.,][]{kendall1984brownian}.  Ricci curvature at a point, which is, roughly, \rf{the} average of the sectional curvatures of two-dimensional subspaces of the tangent space at the point, captures the notion of curvature alluded to above. Compactness (unboundedness) and positive (non-positive) curvature of $M$ are closely related, although there are non-trivial exceptions depending on the Riemannian metric $g$ (for instance, manifolds with hyperbolic metric $g$ \citep{everitt2000}). Thus, for negatively curved (typically) non-compact spaces\rf{,} a uniform lower bound on the Ricci curvature is needed to ensure that the Brownian motion is well-behaved, while this is automatically true for positively curved (typically) compact manifolds. These issues, as one can now imagine,  are subtler for a Brownian motion with drift.  

It would thus be reasonable to expect the intrinsic curvature of $M$, and regularity and behavior at infinity of its drift, to influence error of approximations of an intrinsically defined Langevin diffusion and its invariant measure.  This is apparently the case in recent works \citep[e.g.,][]{vempala2022convergence, sra2022}, where curvature-dependent constant terms show up in upper bounds on distances between the stationary measure $\mu^h_\phi$ of a discretized Markov processes $X^h_n$ and $\mu_\phi$ on $M$ defined with respect to the Riemannian volume measure $\dvol$. Importantly, however, the curvature-terms \emph{do not} affect the claimed error order with respect to step size $h$.

A differentiable manifold $M$ is locally Euclidean,  covered by a family of charts (open subsets of $\R^q$) and smooth mappings between them. In a small region of $M$\rf{,} local analysis of the weak error within a chart, where $X$ solves the appropriate stochastic differential equation (SDE), proceeds as in $\R^q$ using the local coordinate representation of the metric $g$ in the chart, which accounts for the local curvature.  The challenge lies in adapting the analysis when one transitions to another chart, where the local representation of the metric $g$, and hence that of $X$, changes. Thus, on flat manifolds $M$ with zero curvature, error analysis proceeds unabated as in $(\R^q,g)$. Indeed, \cite{MST10} demonstrate precisely this with weak error analysis when $M=\mathbb T^q\cong \R^q/\mathbb Z^q$, the $q$-dimensional compact torus, equipped with the flat (quotient) metric from $\R^q$. The main technical ingredient in their analysis was the link between the solution of a Poisson equation (given by a partial differential equation (PDE)) on $\mathbb T^q$ and the infinitesimal generator of the Langevin diffusion $X$.  Aided by the fact that a weak \rf{error} analysis merely requires bounding and matching of a handful of moments, assured by compactness of $\mathbb T^q$, an \emph{optimal first-order} $O(h)$ weak error bound, matching the situation in $\R^q$, was established for general discretization schemes, including the Euler method.

It thus seems plausible to posit that weak error bounds of a suitable Euler method on compact manifolds $M$ should match the optimal first-order rate seen on $\mathbb T^q$. This leads us to the two related questions of interest in this work, stated informally as: 
\begin{enumerate}
	\item [(Q1)] Can an intrinsic Euler discretization method of an intrinsically defined Langevin diffusion on a compact manifold $M$ match the first-order weak error bounds obtained in the Euclidean setting?
	\item [(Q2)] Is it possible to construct estimators of linear functionals of the target measure on $M$ with first-order bias terms?
\end{enumerate}

\subsection{Contributions and related work}
\label{sec:relatedwork}
Our main contribution is in providing affirmative answers to (Q1) and (Q2), and in doing so demonstrate utility of the PDE-based tools in obtaining error bounds of Langevin-based sampling algorithms for compact manifolds. 

Specifically, on a connected compact Riemannian manifold $(M,g)$ without boundary\rf{,} we consider an intrinsically defined Langevin diffusion 
\[
\diff X(t) = -\dfrac{1}{2}\nabla\phi(X(t)) \diff t+\diff B^M(t) ,\quad X(0)=x \in M,
\]
with invariant measure $\diff \mu_{\phi} \propto e^{-\phi(x)}\dvol$, where $B^M$ is a Brownian motion on $M$, and its Euler discretization $X^h_n$ based on moving along geodesics, or their approximations, determined by drift-dependent  tangent direction and $\R^q$-valued noise. The resulting algorithm is termed \emph{Riemannian Langevin}. The potential $\phi$ is not required to be (geodesically) convex. At every step $X^h_n$ automatically belongs to the manifold $M$ owing to use of the exponential map (or more generally, retractions), i.e., the algorithm respects the geometry of the problem. It is well-known in deterministic \citep{HLW02} and stochastic \citep{leimkuhler_mathews_15,milstein2004stochastic} numerical analysis that geometric integrators (i.e., numerical methods which naturally preserve geometric features of differential equations they approximate) are computationally more efficient (e.g., allowing larger time steps while still producing accurate results) for long time simulations as ones arising in sampling problems. The Riemannian Langevin algorithm considered here belongs to the class of such geometric integrators, and is hence suitable for sampling on manifolds.

Two estimators of the linear functional $\mu_\phi(\varphi)$ for a smooth $\varphi$ are proposed: an \emph{ensemble-averaging} estimator defined using \rf{multiple} independent trajectories of $X^h_n$, and a \emph{time-averaging} estimator based on a single trajectory. The former is of great practical merit. First,  it enables decoupling and management of three sources of error: (i) length of the integration time $T$ that controls proximity of $X$ to its ergodic limit; (ii) discretization error controlled by the step size $h$; (iii) the number of independent trajectories of $X^h_n$ that controls the Monte Carlo error (sample variance). Second, the estimator is particularly useful in a parallel computing environment (GPUs or array of CPUs) where independent  trajectories of $X^h_n$ can be generated in parallel and combined in the end. Analysis of the time-averaging estimator leads to first-order bound on a distance between the empirical measure $\mu^h_\phi$ of $X^h_n$ and $\mu_\phi$,  with respect to a smooth class $\mathcal H$ of test functions. 

Our technical analyses are based on exploiting links between the Markov semigroup of $X$ and the Kolmogorov and Poisson PDEs; a positive answer to (Q1), bearing in mind the optimal error rate in $\mathbb T^q$,  is realised by effectively `flattening' the compact $M$, at least as far as the deviation of the stationary measure $\mu^h_{\phi}$ of $X^{h}_{n}$ from $\mu_\phi$ is concerned. The price paid for the proof technique used is seen in the absence of \emph{explicit} curvature-related constants in the weak error bounds. However, in addition to deriving the optimal order for weak error, there are other important benefits to the semigroup approach which make it suitable for analyzing sampling algorithms: (i) analysis can be extended to sampling on non-compact manifolds (Section \ref{sec:non-comp}); (ii) variants of the Riemannian Langevin algorithm with many families of distributions for increments of the Markov chain $X^h_n$, and based on splitting methods, can be analysed in a unified manner (Section~\ref{sec:other}). 

In existing works on SDE-based algorithms for sampling on manifolds \citep{vempala2022convergence,sra2022} weak convergence of the algorithm is established by first proving mean-square (strong) convergence, since weak convergence follows from mean-square; a consequence of this approach is \rf{usually} a non-optimal weak convergence order when compared to the situation in $\R^q$. A notable exception to this is \cite{TAL96}, which we discuss in Section~\ref{sec:RLMC}. 
\rf{Mean-square convergence of numerical methods for intrinsically defined SDEs on a Riemannian manifold is studied in \cite{man_msq}.}

Methods convergent in mean-square are needed when one is interested in proximity of trajectories of an SDEs' approximation to trajectories of the exact solution \citep{milstein2004stochastic}. However, the task of approximating expectations with respect to 
SDEs, including that of ergodic limits, is in principle simpler: it was noted in \cite{GN78a} that, roughly, weak-sense convergence needs only moments of the SDEs' solution and its approximation to be matched up to some order. Consequently, demands on approximations convergent in weak sense are less stringent than for mean-square schemes, especially in the case of higher-order approximations. In particular on $\R^q$, the Euler scheme for general SDEs has mean-square order $1/2$, while its weak order is $1$, and there are constructive weak order schemes of higher orders and no constructive mean-square schemes of higher order than $1/2$ in the general case \citep{milstein2004stochastic}.

Thanks to the PDE-based technique for proving weak convergence  \citep{GN85,TAL83a,talayinv,MST10} (see also \cite{milstein2004stochastic}), our proofs of weak convergence for Riemannian Langevin algorithms have a unified nature and are easily adaptable to a variety of schemes, for example, involving retraction maps (see Section~\ref{sec:retractions}). 
The consequence of this proof technique is that we do not monitor dependence of the coefficients (at $h$) in upper error bounds on parameters of the problem (such as dimension, curvature, spectral gap, etc). However, using the Talay-\rf{Tubaro} expansion \citep{TAT90} (see Section~\ref{sec:TT})\rf{,} one can estimate the leading error term of a weak scheme (i.e., the posteriori error), which is valuable in practice.

The central challenge in extending the analysis to non-compact $M$ lies in the fact that, unlike when $M=\R^{q}$ or a Ricci-flat manifold $M$ (e.g., $\mathbb T^{q}$), it is insufficient to merely control behavior of the drift term $\nabla \phi$, since the non-positive curvature of $M$ also dictates behavior of an intrinsic Brownian motion $B^M$. The Langevin diffusion can hence fail to be ergodic even when its drift term is well-behaved on a geodesically complete negatively curved $M$.  We discuss conditions that would enable obtaining first-order weak error bounds in Section~\ref{sec:non-comp}, and also provide numerical illustrations of the same in Section~\ref{sec:numpd}, but postpone the theoretical analysis to future work. 

As mentioned earlier, there are some existing works that provide curvature-dependent bounds for distances between $\mu^h_\phi$ and $\mu_\phi$ under different intrinsic discretizations $X^h_n$ and settings: \cite{TAL96} for chart-dependent Euler scheme (see further discussion at the end of Section~\ref{sec:RLMC}); \cite{sra2022} for the geodesic Euler scheme considered here claim to develop $O(h^{1/2})$  bound in expected \rf{squared} Riemannian distance between $X(\rf{t_n})$ and $X_{n}^{h}$, $\rf{t_n} = nh$; 
\cite{vempala2022convergence} provide an $O(h)$ bound on the Kullback-Leibler distance for Hessian manifolds, but assume possibility of \emph{exact} sampling of a Brownian increment on $M$, which currently is available for spheres \citep{mijatovic2020note}. 
Further, the above works do not consider and analyse an estimator of $\mu_\phi(\varphi)$ while we obtain an $O(h)$ bound on the bias of the estimators.

Although not directly related to our work, we note that there are quite a few papers on extrinsic discretization methods for an embedded Langevin diffusion $X$ defined by viewing $M$ as a submanifold of $\R^k, k \geq q$ \citep[e.g.,][]{lelievre2012, sharma2021, vilmart2022, armstrong2022}. A critical limitation of extrinsic methods is that while $X$ is constrained to move on $M$, the discretized Markov process $X^h_n$ need not, and repeated, computationally expensive, projections on to $M$ are required to ensure that $X^h_n$ does not leave $M$; an exception is when $M$ is a Lie group, and the group action enables transport along the manifold \cite[e.g.,][]{MW08,Ruslan09,Johan10,Ruslan15}. Use of projections results in an additional source of error over the original discretization of $X$, avoided entirely by intrinsic methods.  Intrinsic methods may require additional computation of geodesics when they are unknown in closed-form, but this can be addressed by using computationally cheaper, higher-order retractions which do not affect the theoretical weak error bounds (see Section~\ref{sec:retractions}). 
\rf{We note that as with the approximation of the projection map in the extrinsic setting, an efficient computation of retraction in our intrinsic setting does not depend on the potential $\phi$ and noise, and depends only on the manifold.}

\rf{
Projection-based schemes are inappropriate when the target probability density to sample from on a manifold $M$ is defined with respect to a volume form coming from a specific Riemannian metric $g$ that is not the restriction of the standard metric in $\mathbb R^k, k\geq q$. Moreover, when $M$ is an \emph{open} submanifold of $\mathbb R^k$, in the absence of a level-set representation of $M$, projection-based schemes are inappropriate. A practically relevant example that highlights the above two issues concerns the manifold $\mathcal P_m$ of $m \times m$ real symmetric positive definite matrices of dimension $q=m(m+1)/2$ (see Section \ref{sec:numpd}). Firstly, it is an open cone within the same dimensional vector space of symmetric matrices so that $k=m(m+1)/2$; an orthogonal projection would map a symmetric matrix with a single negative eigenvalue to a positive definite matrix with smallest eigenvalue very close to zero, close to the boundary of $\mathcal P_m$, which is infinitely far away with respect to the intrinsic geometry of $\mathcal P_m$, and the projection-based sampling scheme will fail to sample from the target measure with support in $\mathcal P_m$. Secondly, there are numerous choices of intrinsic Riemannian metrics motivated by practical applications \citep[e.g.][Chapter 3]{pennec2006riemannian} which lead to specific target probability densities that cannot be sampled with projection-based schemes  that views $M$ as a submanifold of $\mathbb R^k$ with the induced metric. 

Additionally, projection-based schemes are typically less stable (requiring the use of appreciably smaller time steps) compared to geometric integrators, which by construction automatically lie on the manifold; see, for example, \citet{Johan10} when $M=\S^2$. For $M$ that is a Lie group or a homogeneous space of one, Lie group integrators preserving the geometry represent an efficient choice. However, for general manifolds $M$ with specific metrics $g$ the intrinsic methods considered in this paper will typically have an advantage when compared with projection-based schemes. 
}


\section{Preliminaries}

In Section~\ref{sec_diff_geom_con}\rf{,} we briefly review concepts and fix notation from differential geometry needed for analysis on a Riemannian manifold $M$; for  more details we refer to some well-known sources \citep[e.g.,][]{jost2008riemannian,cheeger1975comparison,gallot1990riemannian, bishop2011geometry}. 
In Section~\ref{sec:sde}\rf{,} we introduce the Langevin diffusion on $M$ with a given invariant measure,  the target measure for sampling.  

\subsection{Differential geometric concepts} \label{sec_diff_geom_con}

Let $M$ be a $q$-dimensional topological manifold. By this we mean a Hausdorff, paracompact, locally Euclidean topological space. We further assume that $M$ is connected and without a boundary. A chart of $M$ is a double $(U,\psi)$ where $U$ is an open set of $M$ and $\psi:M \rightarrow \R^q$ is a homeomorphism. A collection of these charts that cover $M$ entirely is called an atlas. For any two different charts $(U_i,\psi_i)$ and $(U_j,\psi_j)$ that overlap (for any $i,j$ such that $U_i\cap U_j \neq \emptyset$), the map $\psi_j \circ \psi_i^{-1}:\psi_i(U_i\cap U_j) \rightarrow \psi_j(U_i\cap U_j)$ is known as a transition map. Then $M$ is a $C^k$-differentiable manifold if the transition maps are $C^k$ with $C^k$ inverses. In this paper, we consider the case when $k=\infty$, then $M$ is known as a smooth manifold.

Tangent vectors at a point $x \in M$ may be defined using derivations (linear operators on smooth functions on $M$ compatible with the product rule of differentiation) or as (time) derivatives of an equivalence class of curves passing through $x$. Denote by $T_xM$ the $q$-dimensional tangent space of $x \in M$ and by $TM=\cup_x T_xM$ the tangent bundle of $M$, itself a differentiable manifold of dimension $2q$. A local coordinate system $x=(x^1,\ldots,x^q)^\top$ in a chart induces a basis $\partial_x=\{\partial _{x^1},\ldots,\partial _{x^q}\}$ at $T_xM$ such that any $v \in T_xM$ may be expressed as $v=v^i\partial _{x^i}$, where the Einstein summation notation, which will be used throughout, is used. 
The map $X:M \to TM$ is a vector field with $X(x) \in T_xM$ for $x \in M$. 

Tangent spaces $T_xM$ and \rf{the} tangent bundle $TM$ have dual spaces, the cotangent space $T_x^*M$ and cotangent bundle $T^*M$ consisting of linear functionals $\rf{\nu}:T_xM \to \mathbb R$. The coordinate basis $\{\partial _{x^1},\ldots,\partial _{x^d}\}$ induces a similar basis $\{\diff x^1,\ldots,\diff x^q\}$ on $T^*_xM$ defined via the evaluation $\diff x^j(\partial_{x^i})=\delta^j_i$, where $\delta^j_i$ is the delta function, such that $\nu=\nu_i\diff x^i$ for every $\nu \in T^*_xM$.

A differentiable manifold $M$ becomes a Riemannian  manifold $(M,g)$ when equipped with a Riemannian metric $g:T_xM \times T_x M \to \mathbb R$, a family $\{g(x), x \in M\}$ of \rf{smoothly varying} symmetric positive definite bilinear \rf{forms} (inner products). In a chart around $x$, the metric $g(x)$ can be expressed as a symmetric positive definite matrix whose $(i,j)$\rf{-}th entry is $g(\partial_{x^i},\partial_{x^j})$, \rf{which we write in index notation as $g_{ij}$}.


Denote by $D$ the unique torsion-free Levi-Civita connection, or covariant derivative, compatible with $g$. The connection enables definition of a derivative $D_XY$ of a vector field $\rf{Y}$ along a vector field $\rf{X}$. For a smooth function $f:M \to \R$, denote by $X(f)$ the derivative of $f$ in the direction of $X$ based on the interpretation of tangent vectors as derivations. The covariant derivative is linear in $X$ and follows the product rule in $Y$ so that $D_X(fY)=X(f)Y+fD_XY$. More conveniently, $D_XY$ may be written in local coordinates at $x$ using the relation $D_{\partial_{x^i}}\partial_{x^j}=\Gamma^k_{ij}\partial_{x^k}$, where $\Gamma^k_{ij}$ are the Christoffel symbols. 

The length of a curve $c:[0,1]\rightarrow M$ connecting $c(0)=x$ and $c(1)=y$ is given by
\[
L(c) = \int_0^1\sqrt{g( \dot{c}(t),  \dot{c}(t))}\diff t,
\]
where $\dot{c} \in T_{c(t)}M$ is the tangent vector to the curve at $c(t)$. 
A curve that minimises the length functional is called a \emph{geodesic}, and this may not be unique. A geodesic $\gamma$ satisfies the equation $D_{\dot \gamma}\dot \gamma=0$, and in local coordinates may be expressed as
\begin{equation}\label{eq:geodesic eq}
	\dfrac{\diff ^2\gamma^k}{\diff t^2} + \Gamma_{ij}^k \dfrac{\diff \gamma^i}{\diff t}\dfrac{\diff \gamma^j}{\diff t}=0.
\end{equation}
Geodesics enjoy a rescaling property: if $t \mapsto \gamma(t)$ is a geodesic with $\gamma(0)=x$ and $\dot \gamma(0)=v$,  then $t \mapsto \gamma(\alpha t)$ is a geodesic with $\dot \gamma(0)=\alpha v$ for every $(x,v) \in TM$ and $\alpha, t \in \mathbb R$. Aided by the rescaling property of geodesic, the Riemannian distance $(x,y)\mapsto \rho(x,y)$ between two points $x,y \in M$ is defined to be the length of the geodesic $\gamma$ with $\gamma(0)=x$ and $\gamma(1)=y$. 

A manifold $M$ is said to be geodesically complete  if every geodesic $\gamma:(-a,a) \to M$ can be extended to a geodesic with domain $\R$, and by the Hopf-Rinow theorem, the space $(M,\rho)$ is complete as a metric space, and the two notions of completeness coincide; this ensures that for such manifolds there exists at least one minimizing geodesic between any two points whose length is the distance between the points. Moreover, $(M,\rho)$ is said to be compact if it is a compact metric space, and every compact Riemannian manifold without boundary is complete. 

Theory of differential equations guarantees that for every $(x,v) \in TM$, there exists a unique geodesic $\gamma$ starting at $x$ with $\gamma(0)=x$ with $\dot \gamma(0)=v$. This results in a surjective map on $T_xM$ for complete $M$ that maps a vector $v \in T_xM$ to a point on $M$ along geodesics: straight lines through the origin in $T_xM$ map to geodesics through $x$ on $M$. 
\begin{definition}\label{def:expmap}
	Let $\gamma:[0,1]\rightarrow M$ be a geodesic with $\gamma(0) = x$ and $\dot \gamma(0)=v$. Then
 \[
\exp_x: T_xM \to M, \quad v \mapsto \exp_x(v):=\gamma(1)
 \]
 is known as the exponential map at $x$. 
\end{definition}     
Although surjectivity ensures that $\exp_x$ is defined on all of $T_xM$ for every $x \in M$, it is injective only in a neighbourhood $N_x$ around the origin in $T_xM$. Every geodesic $\gamma(tv)=\exp_x(tv)$ starting from $t=0$ is either minimizing on all $t \in \R$ or is minimizing until $s<\infty$; then $\gamma(sv)$ is called a \emph{cut point} of $x$. The set of all cut points of all geodesics starting from $x$ is the cut locus $\mathrm{cut}_x$ of $x$. 

The neighbourhood $N_x$ is star-shaped and connected, and $\exp_x(N_x)=M-\mathrm{cut}_x$.  Relatedly, the radius of injectivity, $\mathrm{inj}(x)$, of a point $x \in M$ is the maximal radius of balls around the origin in $T_xM$ on which the exponential map is injective; or, in other words, it is the radius $r$ of the largest geodesic ball centered at $x$ such that each geodesic connecting to $x$ within the ball is minimizing. The injectivity radius $\mathrm{inj}_M$ of the manifold $M$ is the smallest of such radii across $M$.


It is necessary to introduce a handful of tensors in preparation for the analysis of the algorithm. First, the exterior derivative $\diff$ is a differential operator that maps $C^1$ functions to the cotangent \rf{bundle} $T^*M$, and in local coordinates assumes the form $\diff f(x) = \frac{\partial f(x)}{\partial x^i}\diff x^i$. The quantity $\diff f$ is known as a 1-form. More generally, an $r$-form,  for $r\in \{1,\ldots,q\}$,  can be considered on $\wedge^r T_x^*M$ with anti-symmetric wedge product $\wedge$ spanned by $\diff x^1\wedge \cdots \wedge \diff x^r$. For brevity, the wedge symbol is omitted when discussing differential forms. Then, the exterior derivative $\diff $ maps $r$-forms to $(r+1)$-forms and $\diff ^2=0$. For the purposes of integration, we introduce the volume form, a $q$-form, $\dvol = \sqrt{|g|}\diff x^1 \ldots \diff x^q$, where $|g|$ denotes the determinant of the Riemannian metric $g$ in local coordinates around $x$.  

For $v \in T_xM$,  the relation
$$ \diff f(x)(v) = g(\nabla f(x),v) $$
defines the Riemannian gradient operator $\nabla$ acting on smooth functions $f:M \to \R$ as the dual of $\diff f$. In local coordinates,  the gradient is written as $\nabla f = g^{ij}\frac{\partial f}{\partial x^i}\frac{\partial}{\partial x^j}$ \rf{where $g^{ij}$ are the elements of the inverse metric}; it is often convenient to omit dependence on $x$ and use $\diff f(v), v \in T_xM$.  

The $L^2$-adjoint of the gradient is the negative of the divergence operator. For a tangent vector $v \in T_xM$ the local coordinate representation of divergence is $\mathrm{div}(v) = \frac{1}{\sqrt{|g|}}\frac{\partial}{\partial x^i}(\sqrt{|g|}v^i)$.

The Hessian of a $C^2$ function $f:M \to \R$ is 
$$\Hess^f = D^2f = D\diff f.$$
Then, the Laplace-Beltrami operator, as an analogue of the Laplacian in $\R^q$,  is defined as the trace (with respect to the metric tensor) of the Hessian
\[
\rf{\Delta_M} f = \mathrm{Tr}\Hess^f = g^{ij}\Hess_{ij}^f.
\]

For higher derivatives of functions, $D^k: C^\infty(M) \rightarrow (T^*M)^{\otimes k}$, i.e. $D^k$ maps smooth functions to the $k$ times cotangent space. We can contract $D^kf$ by interpreting it as the mapping $D^k f:(TM)^{\otimes k} \rightarrow \R$. For the contracted tensor $S \in (T^{*}M)^{\otimes k}$, $S(v_1,\ldots,v_k)$ for arbitrary $v \in TM$, the covariant derivative acts multilinearly on the $(0,k)$-tensor $S$ and also on each of the vector fields. More precisely, $D_{v} (S(v_1,\ldots,v_k)) = (D_v S)(v_1,\ldots,v_k) + S(D_v v_1,v_2,\ldots,v_k) + \cdots + S(v_1,\ldots,v_{k-1},D_v v_k)$ \citep[e.g.,][pp. 202]{jost2008riemannian}. The norm $g$ can be extended to tensors, for a tensor $S \in T^{p,q}_xM$ the norm of $S$ is 
\[
|S|^2 = g_{i_1k_1} \cdots g_{i_pk_p}g^{j_1l_1} \cdots g^{j_ql_q} S^{i_1 \cdots i_p}_{j_1 \cdots j_q} S^{k_1 \cdots k_p}_{l_1 \cdots l_p}.
\]
The parallel transport $\Pi_{\gamma_{x,y}}:T_xM \to T_yM$ along a curve $\gamma_{x,y}:[0,1]\to M$ connecting $x$ and $y$ is a linear isometry determined by a unique vector field $X$ satisfying $D_{\dot \gamma(t)}X=0$ for all $t$, such that $g(\Pi_{\gamma_{x,y}}u,\Pi_{\gamma_{x,y}}v)=g(u,v)$ for all $u,v \in T_xM$. More generally, the parallel transport $\Pi_{\gamma_{x,y}}$ of a tensor $S \in T_x^{p,q}M$ along an arbitrary geodesic (or even curve) $\gamma$ from the point $x=\gamma(0)$ to $y = \gamma(1)$ is such that it preserves the inner product between $S$ and $\dot \gamma$, $g(S,\dot \gamma(0)) = g(\Pi_{\gamma_{x,y}}S,\dot\gamma(1))$, and also its norm, $ g(S,S) = |S|^2 = |\Pi_{\gamma_{x,y}}S|^2$. It is similarly defined by solving the ODE $D_{\dot\gamma}\Pi S=0$.

Define the operator norm for a $(0,k)$-tensor $\mathfrak T$ on $M$ \rf{\citep[e.g.][]{le2024diffusion}}
\begin{align}
	|\; \mathfrak T\;|_{\op} := \sup_{V_{1},\dots,V_{k} \in T_x M,\; |V_i| \neq 0} \frac{|\mathfrak T(V_1,\dots,V_k)|}{\prod_{i=1}^{k}|V_i| }
\end{align}
for each $x \in M$. In addition to spaces of continuously differentiable functions $C^l(M)$, we will need H{\"o}lder spaces  $C^{l,\epsilon}(M)$,  $  0 <\epsilon < 1$, containing real-valued functions $u(x)$, $x \in M$,  with corresponding number of H{\"o}lder continuous derivatives and with the norm
\begin{align*}
	\Vert u\Vert_{C^{l,\epsilon } (M)}	:=	\sum\limits_{k=0}^{l}\sup_{x \in M} |D^{k} u(x)|_{\op}
	+\sup_{\gamma_{x,y}, x \neq y \in M}\frac{|D^{l} u(x)-\Pi_{\gamma_{x,y}} (D^{l} u(y))|_{\op}}{(\rho(x,y))^{\epsilon}},  
\end{align*}
where $\gamma_{x,y}$ denotes any possible minimal geodesic from
$y$ to $x$ and $\Pi_{\gamma_{x,y}}$ denotes the parallel transport from $T_y M$  to $T_x M$ along $\gamma_{x,y}$.
Analogously, $C^{l/2,l,\epsilon } ([0,T]\times M )$ is a space of sufficiently smooth functions $u(t,x)$ with the norm for even $l$:
\begin{align*}
	&\Vert u \Vert_{C^{l/2,l,\epsilon } ([0,T]\times M)}	
	:=	\sum\limits_{k=0}^{l/2} \sum_{n=0}^{l-2k}
	\sup_{t  \in [0,T]}\sup_{x \in M}\bigg|D^n\frac{\partial^k}{\partial t^k}u(t,x)\bigg|_{\op} \\
	&+ \sum\limits_{k=0}^{l/2} \sup_{t  \in [0,T]} \sup_{\gamma_{x,y}, x \neq y \in M}\frac{|D^{l-2k}\frac{\partial^k}{\partial t^k}u(t,x)-\Pi_{\gamma_{x,y}} (D^{l-2k}\frac{\partial^k}{\partial t^k}u(t,y))|_{\op}}{(\rho(x,y))^{\epsilon}}.
\end{align*}

For vector fields $X,Y,Z$, the Riemannian curvature tensor \rf{at a point $x\in M$} \newline$\rf{R:T_xM \times T_xM \times T_xM\rightarrow T_xM}$ of $M$ is defined as
$$  R(X,Y)Z := (D_XD_Y - D_YD_X - D_{[X,Y]})Z, $$
where the commutativity bracket \rf{(or Lie bracket)} \rf{is} defined  via its application to a smooth function $f:M \to \R$ \rf{by} $[X,Y]f=X(Y(f))-Y(X(f))$. Roughly, it quantifies the difference between the vectors obtained by transporting a vector $Z(x)$ first along $X(x)$ then $Y(x)$, in contrast to moving first along $Y(x)$ and then $X(x)$. The sectional curvature at $x$  is the quadratic form \cite[pp. 89]{hsu2002stochastic}
$$ K(X,Y) := g(R(X,Y)Y,X), \quad X,Y \in T_xM,$$and 
for an orthonormal basis $\{X_i(x)\}$ on $T_xM$, the Ricci curvature at $x$ is a contraction of the sectional curvature:
$$ \Ric(X,\rf{X}): = \sum_{i=1}^q  g(R(X,X_i)X_i,X). $$

\subsection{Langevin diffusion on a Riemannian manifold}\label{sec:sde}

Let $(M,g)$ be a connected compact Riemannian manifold of dimension $q\geq1$ without boundary; this will be the main setting assumed throughout the paper except in Sections~\ref{sec:non-comp} and~\ref{sec:numpd}.
Consider the target probability measure $\mu_\phi$ on $M$ 
\begin{equation}\label{invmeasure}
	\diff \mu_\phi = p \, \dvol\thinspace = \thinspace\dfrac{1}{C_\phi}e^{-\phi}\dvol,
\end{equation}
absolutely continuous with density $p$ with respect to the volume measure $\dvol$, where $C_\phi = \int_M e^{-\phi} \dvol<\infty$. The following conditions on smoothness of the metric $g$ and the density $p$ are imposed through that of $\phi$.
\begin{assumption}
  \label{assump:g}
		Assume that $x\mapsto g^{ij}(x) \in C^{\rf{3},\epsilon}(M)$.
\end{assumption}
\begin{assumption}
  \label{assump:phi}
		Assume that $\phi \in C^{3,\epsilon}(M).$ 
\end{assumption}
No further restrictions are placed on $\phi$ (e.g., convexity). The level of smoothness imposed in these \rf{a}s\rf{s}umptions is required primarily for proofs of Theorems~\ref{theorem4.1}-\ref{theorem4.3}, but may be weakened for the content of this section. 

Since we aim to sample from a given probability measure $\mu_\phi$, we must  find an (second order) elliptic linear operator $ \mathcal{A^*}$ acting on probability densities $p$ (with respect to $\dvol$) so that the density $p$ from (\ref{invmeasure}) satisfies the stationary Fokker-Planck equation \citep[Chapter~5, Proposition~4.5]{ikeda2014stochastic}:
\begin{equation}\label{eq:FP}
	\mathcal{A^*} p = 0.
\end{equation}
For example, one can verify that the above is satisfied by the operator
\begin{equation}\label{eq:operA*}
	\mathcal{A^*} p = \dfrac{1}{2}\Delta_M p + \frac{1}{2}\div( p \nabla\phi ),
\end{equation}
which is adjoint to the elliptic operator \cite[Chapter~5, Proposition 4.4]{ikeda2014stochastic}
\begin{equation}\label{eq:operA}
	\mathcal{A} = \dfrac{1}{2}\Delta_M - \frac{1}{2}g(\nabla\phi,\nabla)
\end{equation} 
acting on $C^2(M)$-functions, where $\Delta_M$ is the Laplace-Beltrami operator. 

The Markov process with  infinitesimal generator $\mathcal{A}$ is governed by the following SDE \citep[e.g.,][]{David82,ikeda2014stochastic}
\begin{equation}\label{eq:diffusion}
	\diff X(t) = -\dfrac{1}{2}\nabla\phi(X(t)) \diff t+\diff B^M(t), \quad 0 \leq t \leq T;  \thickspace X(0)=x,
\end{equation}
where $B^M$ is a Brownian motion on the manifold $M$ defined using the Eells-Elworthy-Malliavin frame bundle construction \citep[e.g.,][p.87]{hsu2002stochastic}. 

The Brownian motion $B^M$ can be intuited as follows: given a filtered probability space $(\Omega,\mathcal F, \mathcal F_t,\mathbb P), t \geq 0$,  and an $\{\mathcal F_t\}_{t>0}$-adapted $\R^q$-valued standard Brownian motion $B$, the construction uses the dynamics of $B$ on $\R^q$ to induce one on $M$ by first, mapping the dynamics of $B$ onto the orthonormal frame bundle using the notion of horizontal vector fields, and then projecting down from the frame bundle onto the manifold. Thus, in local coordinates in a chart the SDE  \eqref{eq:diffusion} assumes the It\^o form
\begin{align}\label{eq:diffusion local}
	&\diff X^{i}(t) = -\frac{1}{2}g^{ij}(X(t))\frac{\partial \phi}{\partial x^{j}}(X(t)) \diff t -\frac{1}{2}g^{kj}(X(t))\Gamma_{kj}^{i}(X(t))\diff t + (g^{1/2}(X(t))\rf{)}^{ij}\diff B_{j}(t), \\
	& \quad \quad X(0)=x, \notag 
\end{align}
where $B_{j}$ are $\mathbb{R}$-valued components of $B$. \rf{Under Assumptions~\ref{assump:g} and~\ref{assump:phi}},   \eqref{eq:diffusion local} is well-defined. The frame $g^{-1/2}(x):\R^q \rightarrow T_xM$ maps Brownian dynamics from $\R^q$ to $T_xM$,  and this leads to the diffusion coefficient $g^{-1/2}$ in \eqref{eq:diffusion local}.

On a Riemannian manifold, not necessarily compact, it is known \citep{bakry1986critere} that the diffusion $X$ does not explode, i.e., it is defined for any $t \geq 0$, if
\begin{equation}\label{eq:stoch complete}
	\Ric(v,v)+\Hess^{\rf{\phi}}(v,v) \geq -\kappa g(v,v) \,\, \text{ for all } \, (x,v) \in TM
\end{equation}
for some $\kappa>0$.  The condition is satisfied for any compact $M$, since  the Hessian of $\phi$ is bounded under Assumption~\ref{assump:phi}; moreover, the Ricci curvature is bounded from below since the sectional curvature at any point in $M$ is bounded from below \citep[pp. 167]{bishop2011geometry}. We will revisit this condition in Section~\ref{sec:non-comp} on non-compact manifolds, \rf{where we also discuss how it relates to the Euclidean case}.

A key requirement for sampling and estimation using $X(t)$ is its ergodicity. Recall that a diffusion $X(t)$ is ergodic if there exists a unique invariant measure $\mu$ of $X(t)$, and independently of the initial condition $x \in M$,  the limit
\begin{equation}
	\lim_{t\rightarrow \infty }\E\varphi (X_x(t))=\int_M \varphi (x)\,\diff \mu_\phi
	(x)=:\mu_\phi(\varphi)  \label{PA31}
\end{equation}%
exists for any bounded and measurable function $\varphi :$ $M \rightarrow \R$. 
Furthermore, the process $X(t)$ is exponentially ergodic if for any $x\in M$ and any bounded function $\varphi $ we have the following strengthening of (\ref{PA31}): 
\begin{equation}
	\left\vert \E\varphi (X_x(t))-\mu_\phi(\varphi) \right\vert \leq Ce^{-\lambda
		t},\ \ t\geq 0, 
  \label{PA34}
\end{equation}%
where $C>0$ and $\lambda >0$ are some constants. Since the process $X(t)$ is a non-degenerate diffusion on a compact smooth manifold $M$, its distribution converges at an exponential rate to the  invariant probability measure $\mu_\phi$ \citep[p. 97]{freidlin1985functional}.  Consequently, we can exploit the SDE (\ref{eq:diffusion}) (or its local version (\ref{eq:diffusion local})) for computing ergodic limits $\mu_\phi(\varphi)$.

Consider two examples of Langevin diffusions on compact $M$. 

\begin{example}\label{exmp:vmf}
	Let $M=\S^q$ with canonical round metric $g=\rf{\mathrm{d}} r^2+\sin^2 r\,\rf{\mathrm{d}} \theta^2$ with intrinsic coordinate $x=(r,\theta)$ for $r\in[0,\pi)$ and $\theta \in \S^{q-1}$. The Fisher-Watson distribution has the probability density function $p(x)\propto \exp(\lambda \cos^2 r )$ with respect to the volume form for $\lambda >0$, $r\in [0,\pi)$ \citep{watson1965equatorial}, and thus $\phi(x) = -\lambda \cos^2 r$, which clearly is in $C^{\infty}(M) \subset C^{3,\epsilon}(M)$. At least, two charts are needed for the local representation of $X$ since the cut locus of every point is its antipode, and non-empty. 
\end{example}

\begin{example}
	Let $M=\SO(m)$ be the space of real $m\times m$ orthogonal matrices with determinant 1, a manifold of dimension $q=m(m-1)/2$. Consider the bi-invariant metric $g(E_1,E_2) = -\frac{1}{2}\Tr(E_1E_2)$ for $x \in \SO(m)$, where $E_1,E_2 \in T_x \SO(m) \cong \{x\} \times \mathfrak{so}(m)$ and where $\mathfrak{so}(m)$ is the Lie algebra of $\SO(m)$ containing skew-symmetric matrices. The matrix generalisation of the von-Mises Fisher distribution, known as the matrix von-Mises distribution \citep{downs1972orientation,jupp1979maximum}, has the density function $p(x) \propto \exp(c\Tr(x_0x))$ for $c>0$ and $x,x_0 \in \SO(m)$. With $\phi = -c\Tr(x_0 x)$ it can be verified that for $x \in \SO(m)$ and $e \in \mathfrak{so}(m)$, $\Hess^\phi(xe,xe) = -c\Tr(xe^2)$, which shows that $\phi \in C^2(M)$ \citep[Section 4.3]{lewis2023contributions}.
\end{example}

\subsection{Partial differential equations on $M$ related to the Langevin diffusion}

Central to our analysis of estimators of a linear functional $\mu_\phi(\varphi)$ of the invariant measure $\mu_\phi$  are two PDEs that link the generator $\mathcal A$ to $\mu_\phi(\varphi)$. 

\subsubsection{Backward Kolmogorov PDE}
The backward Kolmogorov PDE on manifold $M$ associated with the SDE \eqref{eq:diffusion} \citep[e.g.,][]{ikeda2014stochastic} is given by: 
\begin{align}
	\frac{\partial u}{\partial t} (t,x) + \mathcal{A}u(t,x) &= 0, \;\;\;\;\; (t,x) \in [0,T]\times M; \nonumber\\ 
	u(T,x) &= \varphi(x), \;\;\;\;\; x \in M.
	\label{BKPDE}
\end{align}

Under Assumptions~\ref{assump:g} and \ref{assump:phi} and $\varphi \in C^{4,\epsilon}(M)$, there is a unique solution of the parabolic problem \eqref{BKPDE} which belongs to $C^{2,4,\epsilon}([0,T]\times M)$ \citep{PDE1,Lad68,lunbook,ikeda2014stochastic} and 
\begin{equation}\label{eq:BKmax}
	\Vert u \Vert_{C^{2,4,\epsilon } ([0,T]\times M)} \leq C \Vert \varphi \Vert_{C^{4,\epsilon }(M)},
\end{equation}
where $C>0$ is a constant independent of $\varphi$. 
The solution of \eqref{BKPDE} also has the following property \citep[Lemma 6.4]{TAL96}, which links geometric ergodicity of the Markov process $X(t)$ solving (\ref{eq:diffusion}) to the decay of the norm of derivatives of its semigroup: there exist constants $\lambda >0$ and $C >0$ that do not depend on $T$ and choice of $\varphi$ such that
\begin{equation}\label{eq:BKdecay}
\sum\limits_{k=0}^{2} \sum_{\substack{n=0 \\ k+n \neq 0}}^{4-2k}
\left\Vert \frac{\partial^k}{\partial t^k}u(t,\cdot)\right\Vert_{C^{n,\epsilon }(M)} \leq  
   C \Vert \varphi \Vert_{C^{4,\epsilon }(M)} e^{-\lambda (T-t)}.
\end{equation}   

For every $(t_0,x_0) \in [0,T] \times M$, the Feynman-Kac formula \citep{ikeda2014stochastic, freidlin1985functional} provides a probabilistic representation of the solution of \eqref{BKPDE} by linking it to the law of the Langevin diffusion $X(t)$ via
\begin{equation}\label{feynmankazformula}
	u(t_0, x_{0}) = \mathbb{E}(\varphi(X(T))),  
\end{equation}
where $X$ is from \eqref{eq:diffusion} with initial condition $X(t_{0}) = x_{0}$. The representation enables construction of the ensemble-averaging estimator of $\mu_\phi(\varphi)$ using multiple independent trajectories of the discretized $X^h_n$.  

\subsubsection{Poisson PDE}
The Poisson PDE on $M$ associated with the SDE \eqref{eq:diffusion} has the form
\begin{align}
	\label{PPDE}
	\mathcal{A}u(x) = \varphi(x) - \mu_\phi(\varphi), \quad x \in  M.
\end{align}
It is clear that $\int_{M}(\varphi(x) - \mu_\phi(\varphi)) \diff \mu_{\phi}(x) = 0$, i.e., the centering condition is satisfied  \citep[e.g.,][]{Miranda,freidlin1985functional}. Under Assumptions~\ref{assump:g} and \ref{assump:phi} and $\varphi \in C^{2,\epsilon}(M)$, there is a unique (upto an additive constant) solution of the elliptic problem (\ref{PPDE}) which belongs to $C^{4,\epsilon}(M)$ \citep{PDE1,PDE2,Miranda} and 
\begin{equation}\label{eq:Pmax}
	\Vert u\Vert_{C^{4,\epsilon } (M)}  \leq  C\Vert \varphi \Vert_{C^{2,\epsilon }(M)}, 
\end{equation}
where $C>0$ is a constant independent of a choice of $\varphi$.

Using It\^o's formula, one can derive the probabilistic representation of solution of (\ref{PPDE}) as \rf{\citep[e.g.,][]{freidlin1985functional,le2024diffusion}}
\begin{align*}
	u(x) = -\mathbb{E}\int_{0}^{\infty}\big(\varphi(X(t)) -\mu_\phi(\varphi)\big)\diff t + \mu_\phi(u).  
\end{align*}
The representation motivates an estimator of $\mu_\phi(\varphi)$ based on a long single trajectory of $X^h_n$, which can then be analysed by the mean ergodic theorem. 

\section{Langevin Sampling Algorithms} 
\label{section3}

Section \ref{sec:RLMC} introduces the intrinsic geodesic Euler discretization of the intrinsic Langevin SDE (\ref{eq:diffusion}) to sample from a given distribution $\mu_{\phi}$ on a Riemannian manifold. The resulting algorithm can be used on non-compact $M$ as well, and numerical experiments with the non-compact manifold of symmetric positive definite matrices in Sections~\ref{sec:numpd1} and~\ref{sec:numpd2} demonstrate this. Section~\ref{sec:retractions} generalises the Riemannian Langevin algorithm to one based on retractions, of which the exponential map is a special case. 
Section \ref{sec:other} discusses possible variants and extensions of the Riemannian Langevin algorithm. 

\subsection{Riemannian Langevin algorithm} 
\label{sec:RLMC}

For $T > 0$ and $t_{0} = 0$, consider a uniform partition $t_{0}<\dots<t_{N} = T$ of the interval $[0,T]$ with time step $h := T/N$, i.e. $t_{n+1} - t_{n} = h$, $ n = 0,\dots, N-1$.
In $\R^q$, the Euler method to discretize \eqref{eq:Langevin} assumes the form (see e.g. \cite{milstein2004stochastic})
\[
X^h_{n+1}=X^h_n-\dfrac{h}{2}\nabla \phi(X_n^h)+\sqrt{h}\xi_{n+1},
\]
where $\xi_n$, $n=1,\ldots,N$, are independent standard Gaussians on $\R^q$ such that $\sqrt{h}\xi_{n}$ represents a Brownian increment in a small time step of size $h$. Evidently, the  dynamics are then along the straight line from $X^h_n$ with respect to the direction 
\[
v_{n+1}:=-\frac{h}{2}\nabla \phi(X^h_n)+\sqrt{h}\xi_{n+1}.
\]

Its natural generalisation to the curved $M$ is via a Markov process whose next state moves along a geodesic with a direction chosen in the tangent space of the current state. Upon isometrically identifying the tangent space with $\R^q$, the following two-step procedure ensues:
\begin{enumerate}
	\item [(i)] In the tangent space $T_{X^h_n}M$, select a random direction $v_{n+1}$ via an $\R^q$-valued random vector $\xi_{n+1}$; 
	\item [(ii)] follow the geodesic $[0,1] \ni t \mapsto \gamma(t)\in M$ with $\gamma(0)=X^h_n, \dot \gamma (0)=v_{n+1}$ to $t=1$, and set $X^h_{n+1}:=\gamma(1)$. 
\end{enumerate}
Note that geodesics can be parameterized using the exponential map $TM \ni (x,v) \mapsto \Exp_x(v) \in M$ defined in Section \ref{sec_diff_geom_con} such that $\gamma(1)=\Exp_{X^h_n}(v_{n+1})$. Surjectivity of the exponential map on complete manifolds $M$ ensures that the two-step procedure is well-defined without requiring any restriction on the support of distribution of the random vector $\xi_{n+1}$. It is possible to replace the geodesic with  curves arising from retractions of suitable order without affecting the order of convergence of the algorithm (see Section~\ref{sec:retractions}).

Choice of the distribution of $\{\xi_n\}$ may be of practical consequence (Section \ref{sec:numpd}). Nevertheless, when studying weak convergence, it suffices to consider a distribution that matches $k$ moments of a Gaussian, where $k$ is related to the order of weak convergence \citep{milstein2004stochastic} and in the case of first order weak methods like the Euler scheme it is sufficient to match the first three moments. 
Thus, in contrast to the algorithms currently available in the literature \citep[e.g.,][]{sra2022} which use Gaussian distributed $\xi_n$ in $T_{X^h_n}$ with respect to the inner product $g(X^h_n)$, we only require that the components of $\xi^i_n, i=1,\ldots,q$ satisfy
\begin{align}\label{eq:xi}
	\E[\xi_n^i] = 0,\;\; \E[(\xi_n^i)^2] = 1,\; \;\E[(\xi_n^i)^3] = 0, \;\;\E[(\xi_n^i)^4] < \infty
\end{align}
for every $n=0,\ldots,N-1$. It suffices, for example, to use a discrete distribution so that for each $n=0,\ldots,N-1$, 
\begin{equation}\label{eq:xi2}
	P(\xi_n^i=\pm 1)=\frac{1}{2}, \quad i=1,\ldots,q.
\end{equation}

The  Markov chain $\{X^h_n,n=0,1,\ldots,N\}$ with $X^h_0=x$, defined as
\begin{equation}\label{eq:algorithm}
	{X}^h_{n+1} = \Exp_{{X}^h_n}\bigg(-\dfrac{h}{2}\nabla\phi({X}^h_n) + h^{1/2}g^{-1/2}({X}^h_n) \xi_{n+1}\bigg),
\end{equation}
represents the discretization of \eqref{eq:diffusion}. The intrinsic method of discretization without using an embedding into $\R^k, \;k \geq q,$ ensures that it suffices to use a $q$-dimensional random vector $\xi_n$, as opposed to a higher-dimensional one used in extrinsic approaches \citep[e.g.][]{vilmart2022, sharma2021, armstrong2022}. 

Importantly, use of the exponential map, or more generally, retractions (see Section~\ref{sec:retractions}), ensures that ${X}^h_{n}$ always lies on the manifold $M$. As discussed in the Introduction, this feature of the algorithms discussed here and in the next subsection is important for their long time stability (in terms of being able to use larger time steps $h$ and hence resulting in overall faster simulations), which is crucial for efficient sampling.

The proposed Riemannian Langevin algorithm may be fruitfully compared to the one proposed by \cite{TAL96}. In their paper an Euler scheme was defined in local coordinates of a chart, and the random variables were constrained to have compact support within the chart to ensure that ${X}^h_{n}$ and ${X}^h_{n+1}$ lie in the same chart  for sufficiently small $h$, to facilitate proof of weak convergence of the scheme. In contrast, our algorithm, and the subsequent proof of weak convergence, requires no such constraints (beyond the natural moment matching condition \eqref{eq:xi}) since the Markov chain arising from our algorithm moves directly along geodesics, and there is no need to verify that ${X}^h_{n}$  and ${X}^h_{n+1}$ belong to the same chart. In Section~\ref{sec:numMF}, we experimentally compare the use of Gaussian and discrete random variables within (\ref{eq:algorithm}).

\subsection{Retractions}\label{sec:retractions}

Closed form expressions for the exponential maps are available for very few manifolds $(M,g)$. Practical utility of the Riemannian Langevin algorithm may be enhanced by using more general retraction maps $F_x:T_xM \to M$ in place of the exponential map. Then the update rule for the Markov chain becomes 
	\begin{equation}
		{X}_{n+1}^{h} = F_{X^h_n}\left( -\dfrac{h}{2}\nabla\phi({X}^h_n) + h^{1/2}g^{-1/2}({X}^h_n) \xi_{n+1}\right). \label{eqn_3.5}
	\end{equation}
A \emph{retraction} is a smooth map from the tangent bundle $F: TM \to M$, whose restriction $F_x$ to $T_{x}M$ satisfies: (i) $F_x(0)=x$; (ii) $DF_x(0)=\text{id}_{T_xM}$, the identity mapping on $T_xM$. In the definition, $0$ is the origin of the concerned tangent space.  We associate with the retraction $F_{X_n^h}$ a curve $c$ which satisfies the initial conditions
 \begin{align}
     c(0) &= X_n^h, \label{init_con_ret1}\\
     \dot c(0) &= V:=-\frac{\sqrt{h}}{2}\nabla\phi(x) + g^{-1/2}(x)\xi.\label{init_con_ret2}
 \end{align}
  Then by \eqref{eqn_3.5}, $c(\sqrt{h}) = X_{n+1}^h$.

There are several ways to construct retractions. 
One way of obtaining a computationally inexpensive retraction is via numerical approximation of the geodesic equation. 
The geodesic equation \eqref{eq:geodesic eq} takes the following form in local coordinates:
\begin{align}\label{eq:geod2}
    \begin{bmatrix} \dot{\gamma} \\ \rf{\dot{v}} \end{bmatrix} = \begin{bmatrix}
        \rf{v} \\ - \Gamma_{ij} \rf{v^{i}} \rf{v^{j}}
    \end{bmatrix} 
\end{align}
 with initial condition 
 \begin{align}
     \begin{bmatrix}
         \gamma(0) \\ \rf{v(0)} 
     \end{bmatrix}
     = \begin{bmatrix}
         x \\ V
     \end{bmatrix},
 \end{align}
 where $$V = -\frac{\rf{\sqrt{h}}}{2}\nabla \phi(x)  + g^{-1/2}(x) \xi .$$

To obtain the first weak order of convergence of the Riemannian Langevin algorithm (\ref{eqn_3.5}) as in the case of the exponential map (\ref{eq:algorithm}), it is necessary to impose conditions on the retraction. This relates to the leading $h^2$ terms in the local error expansion of Lemma~\ref{eq:onesteplemma}. Consequently, we make the following assumption\rf{.}

 \begin{assumption} \label{assump:2nd order ret}
 For a curve $c:[0,\sqrt{h}]\rightarrow M$ with initial conditions (\ref{init_con_ret1})-(\ref{init_con_ret2})
 corresponding to the retraction $F_x$, we require
		\[
  |\E[D_{\dot c(s)}\dot c(s)|_{s=0}]| \leq Ch
  \]
  and
\[
  |\E[D_{\dot c(s)}D_{\dot c(s)}\dot c(s)|_{s=0}]|\leq C h^{1/2},
  \]
  where $C>0$ is independent of $h$. 
\end{assumption}

 To illustrate our retraction based approach, we solve the geodesic equation \eqref{eq:geod2}  by using a single step of size $\sqrt{h}$ of  the standard $(1/6,1/3,1/3,1/6)$ 4th-order Runge-Kutta method (RK4) (see e.g. \cite{HNW93}). This is employed in Section~\ref{sec:numMF} on the 2-sphere $\S^2$, where the exponential map is not readily available in analytic form in \rf{the} intrinsic coordinates $(r,\theta)$ for the round metric.  \rf{Since this approximation occurs within a specific, localized coordinate system, the resulting solution belongs to the manifold $M$,} and hence a projection onto $M$ is not required, i.e., there is no additional loss in accuracy.

We now verify that the curve obtained by applying RK4 method to \eqref{eq:geod2} with time step $s \in (0,\sqrt{h}]$ satisfies Assumption~\ref{assump:2nd order ret}.
 We denote
\begin{align}
p(s)=
 \begin{bmatrix}
        c(s) \\ \tilde{c}(s) 
    \end{bmatrix} :=   \begin{bmatrix}
        \gamma(s) \\ v(s) 
    \end{bmatrix}  + e(s) =  \begin{bmatrix}
        \gamma(s) \\ v(s) 
    \end{bmatrix}  +  \begin{bmatrix}
        e_{1}(s) \\ e_{2}(s) 
    \end{bmatrix}, 
\end{align}
where $|e_i(s)| \leq Cs^{5}$, $i = 1,2$. Note that, since the curve $p(s)$ in $TM$ corresponds to the one step of RK4 method with step $s$, we can write the error $e_{1}(s)$ as the following expansion \citep{HNW93} in terms of $s$:
\begin{align}\label{eq:310}
    e_{1}(s) = s^{5}a_{1} + s^{6}a_{2} + s^{7}a_{3} + {O}(s^{8}), 
\end{align}
where $a_{i} \in \mathbb{R}^{q}$, $ i = 1,\dots,3$, are independent of $s$.

Writing $D_{\dot{c}(s)} \dot{c}(s)$ in local coordinates we obtain 
\begin{align*}
      \frac{\diff^{2}}{\diff  s^{2}}c^{k} &+ \Gamma_{ij}^{k}\frac{\diff c^{i}}{\diff s}\frac{\diff c^{j}}{\diff s} =  \frac{\diff^{2}}{\diff  s^{2}}\gamma^{k} + \Gamma_{ij}^{k}\frac{\diff\gamma^{i}}{\diff s}\frac{\diff\gamma^{j}}{\diff s} +  \frac{\diff^{2}}{\diff s^{2}}e^{k}_{1} + \Gamma_{ij}^{k}\Big(\frac{\diff e^{i}_{1}}{\diff s}\frac{\diff e^{j}_{1}}{\diff s} + \frac{\diff\gamma^{i}}{\diff s}\frac{\diff e^{j}_{1}}{\diff s}+ \frac{\diff e^{i}_{1}}{\diff s}\frac{\diff \gamma^{j}}{\diff s}\Big) \\&  = \frac{\diff^{2}}{\diff  s^{2}}e^{k}_{1} + \Gamma_{ij}^{k}\Big(\frac{\diff e^{i}_{1}}{\diff s}\frac{\diff e^{j}_{1}}{\diff s} + \frac{\diff \gamma^{i}}{\diff s}\frac{\diff e^{j}_{1}}{\diff s}+ \frac{\diff e^{i}_{1}}{\diff s}\frac{\diff\gamma^{j}}{\diff s}\Big),
\end{align*}
where $ k = 1,\dots,q$. It is then clear that $D_{\dot{c}(s)} \dot{c}(s)|_{s= 0} = 0$ a.s..  Let $(c,\vartheta)$ be a curve on $TM$ such that $\vartheta(s) \in T_{c(s)}M$ and $k-$th component of $\vartheta (s)$ is given by
\begin{align}
     \vartheta^k(s):=\frac{\diff^{2}}{\diff s^{2}}e^{k}_{1} + \Gamma_{ij}^{k}\Big(\frac{\diff e^{i}_{1}}{\diff s}\frac{\diff e^{j}_{1}}{\diff s} + \frac{\diff \gamma^{i}}{ \diff s}\frac{\diff e^{j}_{1}}{\diff s}+ \frac{\diff e^{i}_{1}}{\diff s}\frac{\diff \gamma^{j}}{\diff s}\Big),
\end{align}
and we are interested in $D_{\dot{c}(s)}D_{\dot{c}(s)} \dot{c}(s) = D_{\dot{c}(s)}\vartheta (s)$. We can write $D_{\dot{c}(s)}\vartheta (s)$ in local coordinates as
\begin{align*}
\frac{\diff}{\diff s}\vartheta^{k} + \Gamma_{ij}^{k}\vartheta^{i}\frac{\diff}{\diff s}c^{j}(s),
\end{align*}
which  is of order $O(s^{2})$ since all the terms have a derivative of $e_1$ and the highest derivative is of 3rd order (see (\ref{eq:310})). Consequently, $D_{\dot{c}(s)}D_{\dot{c}(s)} \dot{c}(s)|_{s = 0} = 0$ a.s.. Thus, the approximation of the geodesic equation \eqref{eq:geodesic eq} \rf{by the one step of RK4 with time step $\sqrt{h}$} satisfies Assumption~\ref{assump:2nd order ret}. 

For other applications of retractions, see \cite{schwarz2023efficient}, where retraction-based geodesic random walks on compact Riemannian manifolds were studied, and \cite{absil2008optimization}, where retractions are used in optimization algorithms on manifolds.

\subsection{Other Langevin-based sampling algorithms}\label{sec:other}

It is always beneficial to have a range of algorithms for solving problems numerically. Here we consider two further variants of the Riemannian Langevin algorithm, each of which again ensures that $X^h_n$ always stays on $M$. 
Similarly to the algorithms from the previous two subsections, these algorithms are likewise of weak order 1 (see Remark~\ref{rem:other}). 

\begin{itemize}[leftmargin=*]
	\item Consider different random variables than $\xi_n$.
	
	An example of which is the generalisation of the \textit{walk-on-sphere} \citep[Section 7.1.1]{milstein2004stochastic} on $\R^q$ to Riemannian manifolds given by
	\begin{equation}
		X_{n+1}^{h} =  \Exp_{{X}^h_n}\bigg(-\dfrac{h}{2}\nabla\phi({X}^h_n) + \sqrt{qh}g^{-1/2}({X}^h_n) \eta_{n+1}\bigg),\label{eqn_3.4}
	\end{equation}
	where $\{\eta_{n}\}$ are independent uniformly \rf{distributed on the unit sphere in $\R^q$}, and $\sqrt{q}$ is used to take into account that $\mathbb E(\eta_n^i \eta_n^j)=\delta_{ij}/q$ so that $\sqrt{q} \eta_n^i $ satisfy the conditions on moments (\ref{eq:xi}).

	\item Consider \textit{splitting methods} that divide each iteration of the sampling algorithm into first approximating the drift $\nabla \phi$ followed by approximating a Brownian increment.

	An example is the following Markov chain based on the idea of splitting of the SDE flow \citep[e.g.,][]{milstein2004stochastic}:
	\begin{align}
		\hat{X}_{n+1}^{h} &= F_{X_{n}^{h}}\bigg(-\frac{h}{2}\nabla \phi(X_{n}^{h})\bigg),\label{eqn_3.6}\\ 
		X_{n+1}^{h} &= P_{\hat{X}_{n+1}^{h}}\big(h,\xi_{n+1}\big),\label{eqn_3.7}
	\end{align}
	where $F$ can be the exponential map or a retraction as described above, $\hat{X}_{n+1}^{h}$ denotes the intermediate step, and $P(h,\xi_{n+1}) : TM \rightarrow M$ and $\xi_{n+1}$ are such that
	\begin{align}
		\mathbb{E}\Big(f\big(P_{\hat{X}_{n+1}^{h}}(h,\xi_{n+1})\big)\Big) = f(\hat{X}_{n+1}^{h})
		+ \frac{h}{2}\Delta_{M}f(\hat{X}_{n+1}^{h}) + {O}(h^{2})
	\end{align}
	for any $f \in C^{4, \epsilon}(M)$. 
	The above relation can be satisfied, for example, if $P$ is an exact simulation of Brownian motion from $t_{n}$ to $t_{n} +h$, or if 
	\begin{align}
		P_{\hat{X}_{n+1}^{h}}\big(h,\xi_{n+1}\big) = F_{\hat{X}_{n+1}^{h}}\bigg(h^{1/2}g^{-1/2}(\hat X_{n+1}^{h})\xi_{n+1}\bigg).
	\end{align}
	The sampling method of \cite{vempala2022convergence} for Hessian manifolds can be considered as a special case of (\ref{eqn_3.6})-(\ref{eqn_3.7}) with $F$ being exponential map at $X_{n}^{h}$ and $P$ being exact simulation of Brownian motion starting at $\hat{X}_{n+1}^{h}$. 
	
\end{itemize}


\section{Error Bounds} 
\label{sec:bounds}
In this section, first-order weak error bounds are given for the ensemble- and time-averaging estimators of $\mu_\phi(\varphi)=\int_{M}\varphi \diff \mu_{\phi}$, based, respectively, on a single and multiple independent trajectories of $X^h_n$ obtained from \eqref{eq:algorithm} or \eqref{eqn_3.5}.
Proofs of the three theorems stated here are in the Appendix. We prove the theorems under the stated sufficient conditions which include smoothness assumptions on $\phi$, $\varphi$ and $g$. At the same time, the algorithms presented can be successfully used when, for example, $\varphi$ is less smooth. In this respect we mention that in the Euclidean case (i.e, $M=\R^q$) weak convergence of the Euler scheme with order one was proved for $\varphi$ being just measurable and bounded \citep{BT95}. Relaxing smoothness conditions for the theorems we prove here is a subject of future study.

Theorem~\ref{theorem4.1} is proved for \rf{the} Riemannian Langevin algorithm using a retraction \eqref{eqn_3.5}. For clarity of exposition, Theorems~\ref{theorem4.2} and \ref{theorem4.3} are proved for the algorithm \eqref{eq:algorithm} which uses the exponential map; proofs of these can be generalised to accommodate retractions in a manner similar to the proof of Theorem~\ref{theorem4.1}.

\subsection{Ensemble-averaging estimator}
Equation (\ref{PA34}) implies that $\mu_\phi(\varphi)$ may be estimated upon using $\mathbb{E}(\varphi(X(T)))$ by choosing a sufficiently large $T$. This motivates the ensemble-averaging estimator $\hat \mu_{\phi,N}(\varphi)$ based on independent realizations  $\{X^{(l),h}_{N},l=1,\ldots,L\}$ of $X^h_n$ from the algorithm \eqref{eqn_3.5} up until the final step $N$ (recall that $T=Nh$), since
\begin{align}\label{eq:est_ea}
	\hat \mu_{\phi,N}(\varphi)= \frac{1}{L}\sum_{l=1}^{L}\varphi(X^{(l),h}_{N})\approx 
	\mathbb{E}(\varphi(X_N^h)) \approx \mathbb{E}(\varphi(X(T))) \approx
	\mu_\phi(\varphi)  .  
\end{align}
There are three sources of errors \rf{of} $\hat \mu_{\phi,N}(\varphi)$: (i) proximity to the ergodic limit controlled by a choice of $T$; (ii) the numerical integration error controlled by the time step $h$; and (iii) the Monte Carlo error controlled by the number of Monte Carlo runs $L$. The following result shows how the weak upper bound on its bias depend on both $h$ and $T$, and in a specific way on the class of functions $\varphi$. Note that the initial point $x$ in (\ref{eq:diffusion local}) can be random, independent of the Wiener process $w(\cdot)$, and, consequently, $X_0^h$ in the ensemble-averaging setting can be random, independent of $\xi_n$.
\begin{theorem}[\textbf{Bias of the ensemble averaging estimator}]
	\label{theorem4.1}
	Under Assumptions~\ref{assump:g} and \ref{assump:phi} and $\varphi \in C^{4,\epsilon}(M)$, the following bound holds for the the Riemannian Langevin algorithm with retraction \eqref{eqn_3.5} satisfying Assumption~\ref{assump:2nd order ret}:
	\begin{align}\label{eq:thm41}
		|\mathbb{E}(\hat \mu_{\phi,N}(\varphi))- \mu_\phi(\varphi)| \leq C \Big(h + e^{-\lambda T}\Big),
	\end{align}
	where $C, \lambda$ are positive constants independent of $h$ and $T$ and the constant $C$ linearly depends on $\Vert \varphi \Vert_{C^{4,\epsilon }(M)}$ but otherwise $C, \lambda$ are independent of $\varphi$. 
\end{theorem}


Variance of the estimator $\hat \mu_{\phi,N}(\varphi)$ is estimated in the usual way for sample means (see also Section~\ref{sec:exp}): 
\begin{align}\label{eq:eaVar}
	\mathrm{Var}(\hat \mu_{\phi,N}(\varphi))= \frac{1}{L}\mathrm{Var}(\varphi(X_N^h))=\frac{1}{L}[\mathrm{Var}(\varphi(X(T)))+ O(h)] \\
	=\frac{1}{L} [ \mu_\phi(\varphi^2)-(\mu_\phi(\varphi))^2+ O(h+e^{-\lambda T})]. \notag
\end{align}

\subsection{Time-averaging estimator} 
Since $X(t)$ is an ergodic process, we also have
\begin{align}
	\lim\limits_{T \rightarrow \infty}\frac{1}{T}\int_{0}^{T}\varphi(X(t))\diff t  =  \mu_\phi(\varphi),\;\; a.s., \label{ergotime}
\end{align}
which suggests  that we can take $\frac{1}{T}\int_{0}^{T}\varphi(X(t))\diff t$ as time-averaging estimator for $\mu_\phi(\varphi)$. Hence, the numerical time-averaging estimator is
\begin{align}\label{eq:est_ta}
	\tilde \mu_{\phi,N}(\varphi):= \frac{1}{N}\sum_{n=0}^{N-1}\varphi(X_{n}^h).  \end{align}
In time-averaging estimation, we simulate a long trajectory and average $\varphi$ at the discretized points collected along the long trajectory  as can be seen in (\ref{eq:est_ta}). There are again three errors associated with the estimator (\ref{eq:est_ta}): (i) due to proximity to the ergodic limit controlled by a choice of $T$ (i.e., by the choice of the number of steps $N$ under fixed time step $h$), (ii) the numerical integration error controlled by the time step $h$, and (iii) due to variance of the estimator controlled by $T$.

\begin{theorem}[\textbf{Bias and mean-square error of the time-averaging estimator}]
	\label{theorem4.2}
	Under Assumptions~\ref{assump:g} and \ref{assump:phi} and $\varphi \in C^{2,\epsilon}(M)$, the following bounds hold for the Riemannian Langevin algorithm (\ref{eq:algorithm}):
	\begin{align}
		|\mathbb{E}(\tilde \mu_{\phi,N}(\varphi)) - \mu_\phi(\varphi)| &\leq C \bigg(h + \frac{1}{T}\bigg), \label{theorem4.2est1}\\
		\mathbb{E}( \tilde \mu_{\phi,N}(\varphi)- \mu_\phi(\varphi))^{2} &\leq C \bigg(h^{2} + \frac{1}{T}\bigg), \label{theorem4.2est2}
	\end{align}
	where $C > 0$ is independent of $h$ and $T$ and it linearly depends on $\Vert \varphi \Vert_{C^{2,\epsilon }(M)}$ but otherwise it is independent of $\varphi$.  
\end{theorem}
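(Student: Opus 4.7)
The natural strategy is the semigroup/Poisson equation method, analogous to the approach in \cite{MT07,MST10,talayinv}. Since $\varphi \in C^{2,\epsilon}(M)$, the Poisson PDE \eqref{PPDE} admits a solution $u \in C^{4,\epsilon}(M)$ with the a priori bound \eqref{eq:Pmax}, and on the compact manifold $M$ we have $\|u\|_{\infty} \leq C \|\varphi\|_{C^{2,\epsilon}(M)}$. The identity $\mathcal{A} u(x) = \varphi(x) - \mu_\phi(\varphi)$ then converts integrals of the centered quantity $\varphi(X_n^h) - \mu_\phi(\varphi)$ into increments of $u$ along the chain, up to a discretization remainder and a martingale term.

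The key one-step estimate is to show that, uniformly in the conditioning state $x$,
\begin{equation*}
\mathbb{E}\bigl[u(X_{n+1}^h)\mid X_n^h = x\bigr] = u(x) + h\,\mathcal{A}u(x) + r_n(x),\qquad |r_n(x)| \le C h^{2},
\end{equation*}
with $C$ depending on $\|u\|_{C^{4,\epsilon}(M)}$ and on bounds of $\nabla \phi$ and $g^{-1/2}$. To obtain this I would fix a point $x = X_n^h$, introduce normal coordinates at $x$ (in which the Christoffel symbols vanish at the origin), and Taylor expand the composition $v \mapsto u(\exp_x v)$ at $v = 0$ to fourth order, evaluated at the random increment $v_{n+1} = -\frac{h}{2}\nabla\phi(x) + h^{1/2} g^{-1/2}(x)\xi_{n+1}$. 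Taking expectation and exploiting the moment conditions \eqref{eq:xi} kills odd-order $\xi$-terms and matches $\mathbb{E}[\Hess^u(g^{-1/2}\xi,g^{-1/2}\xi)] = g^{ij}\Hess^u_{ij} = \Delta_M u(x)$, producing exactly the generator $\mathcal{A}u = \tfrac{1}{2}\Delta_M u - \tfrac{1}{2}g(\nabla\phi,\nabla u)$ at first order in $h$; the $O(h^{2})$ remainder is controlled using $\mathbb{E}|\xi_{n+1}|^{4}<\infty$ and the uniform bound on $\|u\|_{C^{4,\epsilon}(M)}$. This geometric Taylor expansion along a geodesic (rather than a straight line in a chart) is the main technical obstacle; I would handle it using normal coordinates exactly as in the analogous proofs of the ensemble-averaging bound (Theorem~\ref{theorem4.1}) referenced to the Appendix.

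Given the one-step estimate, define the martingale difference $\Delta M_{n+1} := u(X_{n+1}^h) - \mathbb{E}[u(X_{n+1}^h)\mid\mathcal{F}_n]$ so that
\begin{equation*}
h\bigl(\varphi(X_n^h) - \mu_\phi(\varphi)\bigr) \;=\; \bigl(u(X_{n+1}^h) - u(X_n^h)\bigr) - \Delta M_{n+1} + r_n(X_n^h).
\end{equation*}
Summing from $n = 0$ to $N-1$, dividing by $T = Nh$, and using $\|u\|_\infty \le C$ and $|r_n|\le Ch^2$ yields
\begin{equation*}
\tilde\mu_{\phi,N}(\varphi) - \mu_\phi(\varphi) \;=\; \tfrac{1}{T}\bigl(u(X_N^h) - u(X_0^h)\bigr) \;-\; \tfrac{1}{T}\sum_{n=0}^{N-1}\Delta M_{n+1} \;+\; \tfrac{1}{T}\sum_{n=0}^{N-1} r_n(X_n^h),
\end{equation*}
where the last sum is pointwise bounded by $Ch$. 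Taking expectations, the martingale sum vanishes and the telescoping term contributes at most $2\|u\|_\infty/T$, giving \eqref{theorem4.2est1}. For the second moment, I would apply the elementary inequality $(a+b+c)^{2}\le 3(a^{2}+b^{2}+c^{2})$: the telescoping term contributes $O(1/T^{2})$, the remainder contributes $O(h^{2})$, and for the martingale sum, orthogonality of increments gives $\mathbb{E}\bigl[\bigl(\sum_{n}\Delta M_{n+1}\bigr)^{2}\bigr] = \sum_{n}\mathbb{E}[\Delta M_{n+1}^{2}]$, each summand bounded by $\mathbb{E}[(u(X_{n+1}^h) - u(X_n^h))^{2}]\le Ch$ (again from the Taylor expansion of $u$ along the geodesic, since $|v_{n+1}|^{2}$ is $O(h)$ in expectation). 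This yields $\mathbb{E}[(\tfrac{1}{T}\sum\Delta M_{n+1})^{2}]\le CNh/T^{2} = C/T$, completing \eqref{theorem4.2est2}.
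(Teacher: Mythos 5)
Your proposal is correct and follows essentially the same route as the paper's proof: the Poisson equation converts $\varphi - \mu_\phi(\varphi)$ into increments of $u$, the covariant (normal-coordinate) Taylor expansion along the geodesic step together with the moment conditions \eqref{eq:xi} yields the one-step identity $\mathbb{E}[u(X_{n+1}^h)\mid X_n^h] = u_n + h\mathcal{A}u_n + O(h^2)$, and telescoping plus orthogonality of the noise across steps gives both bounds. The only cosmetic difference is in \eqref{theorem4.2est2}, where you package the fluctuation into an abstract martingale-difference sequence and invoke the conditional-variance inequality, while the paper expands all Taylor terms explicitly and checks the vanishing of cross terms one by one; both rest on the same orthogonality and give the same $O(1/T)$ contribution.
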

We remark that the error $C/T$ in (\ref{theorem4.2est1})-(\ref{theorem4.2est2}) is due to the two sources of errors - (i) and (iii) as described before the theorem.  They correspond to properties of the considered SDE (\ref{eq:diffusion}). Further, it is not difficult to see that 
the estimates (\ref{theorem4.2est1})-(\ref{theorem4.2est2}) imply that $\Var(\tilde \mu_{\phi,N}(\varphi))\leq C (h^2 + 1/T)$, 
however one can prove under a mixing condition that $\Var(\tilde \mu_{\phi,N}(\varphi))\leq C/T$ with $C>0$ independent of $h$ and $T$ (see \cite[Section 7]{MST10} for more details when $M=\mathbb T^q$).
\rf{For a comparison of computational costs between ensemble averaging and time averaging see \citep[Remark~3.9]{Msisc25}.}

\subsection{Bound on distance to invariant measure} 
The exponential map $TM \ni (x,v)\mapsto \Exp_x(v) \in M$, and more generally retractions $F_x$, are continuous functions of $x \in M$. This and Assumption~\ref{assump:phi} imply that the recursion
\begin{equation*}
	X_{n+1}^{h} = \Phi(X_{n}^{h}, \xi_{n+1})
\end{equation*}
underlying the algorithm engenders a continuous map $\Phi:M \to M$ for every realization of $\xi$. As a result, $\{X^h_n\}$ arising from the proposed algorithm is a Feller chain: for a sequence $x_k \in M$ such that $ \rho(x_k, x) \rightarrow 0$ as $k \rightarrow \infty$, $\E \varphi(\Phi(x_k, \xi)) \rightarrow \E \varphi(\Phi(x, \xi))$ as $ k \rightarrow \infty$ for $\varphi$ belonging to the class of bounded and continuous functions. A Feller chain evolving on a compact separable state space has a stationary measure (not necessarily unique) by the Krylov-Bogoliubov theorem \citep{DaPrato96}. Denote by $\mu_{\phi}^{h}$ a stationary measure of the Markov chain $X_n^h$ from \eqref{eq:algorithm}. The following result provides an upper bound on an integral probability metric between $\mu^h_\phi$ and $\mu_\phi$ with respect to a class of test functions. 

\begin{theorem}[\textbf{Distance to invariant measure}]~\\
	\label{theorem4.3} 
	Under the assumptions of Theorem~\ref{theorem4.2}, with 
	\begin{align*}
		\dist(\mu, \nu) := \sup_{\varphi \in \mathcal{H}}\left| \int_M \varphi \mathrm{d}\mu - \int_M \varphi \mathrm{d} \nu\right|, \label{eq:distme}
	\end{align*} 
	where $\mathcal{H} := \{\varphi \in C^{2,\epsilon}(M) \text{ and }  \Vert \varphi \Vert_{C^{2,\epsilon }(M)} \leq 1 \}$, 
	the following bound holds:
	\begin{align}
		\dist(\mu_\phi,\; \mu_{\phi}^{h}) &\leq Ch,  
	\end{align}
	with $C > 0$ being independent of $h$.  
\end{theorem}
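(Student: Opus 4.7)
The plan is to derive the distance bound as a direct consequence of Theorem~\ref{theorem4.2} by running the discretized chain from its own stationary measure and exploiting the uniformity in initial condition that the Poisson-PDE argument furnishes. Concretely, suppose the chain $\{X_n^h\}$ is started from $X_0^h \sim \mu_\phi^h$. Stationarity of $\mu_\phi^h$ implies $X_n^h \sim \mu_\phi^h$ for every $n \geq 0$, so
\begin{equation*}
\mathbb{E}_{\mu_\phi^h}\bigl[\tilde\mu_{\phi,N}(\varphi)\bigr] \;=\; \frac{1}{N}\sum_{n=0}^{N-1} \mathbb{E}_{\mu_\phi^h}\bigl[\varphi(X_n^h)\bigr] \;=\; \int_M \varphi \, \mathrm{d}\mu_\phi^h,
\end{equation*}
where the subscript indicates that the initial distribution is $\mu_\phi^h$.

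Next, I would revisit the estimate \eqref{theorem4.2est1} in Theorem~\ref{theorem4.2} and observe that its proof, which is based on the Poisson PDE \eqref{PPDE} together with the a priori bound \eqref{eq:Pmax}, delivers a bound $|\mathbb{E}_x\tilde\mu_{\phi,N}(\varphi) - \mu_\phi(\varphi)| \leq C(h + 1/T)$ which is \emph{uniform} in the starting point $x \in M$: the solution $u$ of \eqref{PPDE} satisfies $\|u\|_{C^{4,\epsilon}(M)} \leq C\|\varphi\|_{C^{2,\epsilon}(M)}$, and by compactness of $M$ every one-step weak-error term arising in the telescoping $u(X_N^h) - u(X_0^h) = \sum_n [u(X_{n+1}^h) - u(X_n^h)]$ is controlled uniformly in $X_0^h$. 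Integrating this pointwise bound against $\mu_\phi^h$ preserves it, so
\begin{equation*}
\left| \int_M \varphi \, \mathrm{d}\mu_\phi^h - \mu_\phi(\varphi) \right| \;=\; \left| \mathbb{E}_{\mu_\phi^h}\tilde\mu_{\phi,N}(\varphi) - \mu_\phi(\varphi) \right| \;\leq\; C\left( h + \frac{1}{T}\right)
\end{equation*}
for every admissible $T = Nh$ with $N \in \mathbb{N}$ and every $\varphi \in \mathcal{H}$, with $C$ independent of $\varphi$, $h$, and $T$.

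Since the left-hand side does not depend on $T$, letting $N \to \infty$ while keeping $h$ fixed eliminates the $1/T$ term and yields $|\int_M \varphi\,\mathrm{d}\mu_\phi^h - \mu_\phi(\varphi)| \leq Ch$. Taking the supremum over $\varphi \in \mathcal{H}$ then gives $\dist(\mu_\phi, \mu_\phi^h) \leq Ch$.

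The main obstacle I anticipate is verifying that the constant $C$ appearing in Theorem~\ref{theorem4.2} is genuinely independent of the initial point $x$; this is not stated explicitly, but it is built into the Poisson-PDE derivation because $\|u\|_{C^{4,\epsilon}(M)}$ controls $u$ and its derivatives uniformly over the compact $M$, and the weak one-step error bounds depend only on such global norms of $u$ together with the coefficients of \eqref{eq:diffusion local}, which are themselves bounded on $M$ thanks to Assumption~\ref{assump:phi} and compactness. Once that uniformity is confirmed, the rest of the argument is essentially costless: no further analysis of $\mu_\phi^h$ beyond its mere existence (guaranteed by the Feller property and the Krylov--Bogoliubov theorem as recalled before the statement) is needed, and in particular no uniqueness of $\mu_\phi^h$ is required since the bound applies to any stationary measure.
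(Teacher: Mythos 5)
Your proposal is correct and follows essentially the same route as the paper's own proof: start the chain from the stationary measure $\mu_{\phi}^{h}$, identify $\int_M \varphi\,\mathrm{d}\mu_{\phi}^{h}$ with the expected time average, invoke the bound of Theorem~\ref{theorem4.2} uniformly in the initial point and in $\varphi \in \mathcal{H}$, and let $N \to \infty$ at fixed $h$ to remove the $1/T$ term. Your remark on why the constant is uniform in $x$ (via the global norm \eqref{eq:Pmax} and compactness of $M$) is exactly the point the paper relies on implicitly.
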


\begin{remark}\label{rem:other}
	Results similar to those in Theorems ~\ref{theorem4.1}-\ref{theorem4.3} can be proved for the two variants of the Riemannian Langevin algorithm discussed in Section~\ref{sec:other}. This is a consequence of the proof techniques used, which exploit links between the backward Kolmogorov and Poisson PDEs and the semigroup of the Langevin diffusion. 
\end{remark}

\subsection{Error extrapolation: increasing accuracy order and controlling bias}\label{sec:TT}

 Analogous to the Euclidean case \citep{TAT90} (see also \cite[Section 2.2.3]{milstein2004stochastic}), it is possible to expand  the errors of the bias of the estimators considered earlier in this section in powers of time increment $h$ assuming additional smoothness of $g$, $\phi$ and $\varphi$, which is known as the Talay-Tubaro (or Richardson-Runge) extrapolation. For brevity,  we skip technical details and merely highlight its practical importance. For definiteness, let us consider the bias of the ensemble-averaging estimator $\hat \mu_{\phi,N}(\varphi)$ with $X^h_n$ obtained from \eqref{eq:algorithm}, but it will become clear that the estimator $\tilde \mu_{\phi,N}(\varphi)$ may be treated analogously. Under appropriate smoothness assumptions, one can prove that 
\begin{align}\label{eq:TT}
		\lim_{N \to \infty}\mathbb{E}(\hat \mu_{\phi,N}(\varphi))- \mu_\phi(\varphi)=C_{\rf{1}}h+\cdots +C_{p}h^{p}+O(h^{p+1}),
\end{align}
where the constants $C_{\rf{1}},\ldots ,C_{p}$ are independent of $h,$ $T$, and $p \geq \rf{2}$ is an integer ($p$ can be arbitrarily large if $g$, $\phi$ and $\varphi$ belong to $C^{\infty }(M))$.

Choose a sufficiently large $T$ so that $ e^{-\lambda T}$ in (\ref{eq:thm41}) is negligible, and take two time steps $h_1=h$ and $h_2=\alpha h,$ $\alpha >0,$ $\alpha \neq 1,$ with the corresponding number 
of steps being $N_1$ and $N_2$, respectively. 
Then, according to (\ref{eq:TT}), 
\begin{eqnarray}
\mu_\phi(\varphi) & \approx &\mathbb{E}(\hat \mu_{\phi,N_1}(\varphi))+C_{\rf{1}} h_{1}+O(h^{2})\,,  \label{Db234} \\
\mu_\phi(\varphi) &\approx & {\E}(\hat \mu_{\phi,N_2}(\varphi))+C_{\rf{1}} h_{2}+O(h^{2})\,,  \notag
\end{eqnarray}%
whence 
\begin{equation}
C_{\rf{1}} \approx -\frac{\mathbb{E}(\hat \mu_{\phi,N_2}(\varphi))-\mathbb{E}(\hat \mu_{\phi,N_1}(\varphi))}{h_{2}-h_{1}}+O(h)\,.  \label{Db235}
\end{equation}%
By (\ref{Db234}) and (\ref{Db235}), we get the improved value with error $O(h^{2}):$ 
\begin{equation}
\mu_{\phi}(\varphi)_{imp} := \mathbb{E}(\hat \mu_{\phi,N_1}(\varphi))\frac{h_{2}}{h_{2}-h_{1}}
-\mathbb{E}(\hat \mu_{\phi,N_2}(\varphi))\frac{h_{1}}{h_{2}-h_{1}}. \label{Db236}
\end{equation}%
Thus, the obtained method has an accuracy of order two in $h$. We can continue exploiting (\ref{eq:TT}) and obtain 
a method of order three and so on. On the other hand, $C_{\rf{1}} h$ gives the leading term of the bias of $\hat \mu_{\phi,N_1}(\varphi)$, which can be used for error estimation in practice to assess the quality of the approximation. \rf{In Section~\ref{sec:numMF}, we apply the Talay-Tubaro extrapolation method to generate samples of the von-Mises Fisher distribution with bias of order 2.}
 
 \subsection{Finite-time convergence}\label{sec:FT}

Although this paper is focused on the use of SDEs to sample from a given distribution on $M$, and hence on the estimators for ergodic limits from Section~\ref{sec:bounds}, for some applications it may also be of interest to discuss weak convergence of a Riemannian Langevin algorithm at a finite time $T$; this, for example, can be used to evaluate quantities related to transient behavior of diffusions on manifold \rf{and} solving linear parabolic PDEs on manifolds by exploiting probabilistic representations (see \eqref{feynmankazformula}). 
It is straightforward to prove the following weak-sense convergence theorem along the lines of the proof of Theorem~\ref{theorem4.1}.
\begin{theorem}[\textbf{Finite-time convergence}]
	\label{theoremFT}
	Under Assumptions~\ref{assump:g} and \ref{assump:phi} and $\varphi \in C^{4,\epsilon}(M)$, the Riemannian Langevin algorithm (\ref{eq:algorithm}) for (\ref{eq:diffusion}) is of weak order one, i.e.,
	\begin{align}
		|\mathbb{E}[\varphi(X_N)- \varphi(X(T))]| \leq C h,
	\end{align}
	where $C>0$ is a constant independent of $h$. 
\end{theorem}

\begin{remark}
Theorem~\ref{theoremFT} is more broadly applicable, and holds for weak-sense schemes similar to those in Section~\ref{section3} for a general class of SDEs 
\begin{equation}\label{eq:sdewitha}
\rf{\mathrm{d} X(t)=a(t,X(t)) \mathrm{d} t + \mathrm{d} B^M(t)},\quad X(t_0)=x,
\end{equation}
with a sufficiently smooth vector field $a(t,x)$ on $[t_0,T]\times M$. 
\end{remark}

\rf{\begin{remark}
Results in this section demonstrate that the main techniques in the Euclidean setting on weak approximations and sampling carry over to the setting of compact manifolds $M$. 
In contrast, the situation with mean-square (strong) convergence is more delicate.  
For manifolds $M$ where every point has an empty cut-locus, the Riemannian Langevin algorithm (\ref{eq:algorithm}) with $\{\xi_n\}$ being independent standard Gaussian random variables has mean-square (strong) order $1/2$ \citep{man_msq}. Although this matches the result for Euclidean SDEs with multiplicative noise \citep[e.g.][Chapter 1]{milstein2004stochastic}, the proof in the manifold setting is substantially more complex.
If a manifold contains points with non-empty cut-loci (e.g. positively curved manifolds) then we believe that mean-square schemes should be based on space-time random walks; see \citet[Chapter 6]{milstein2004stochastic}, and also \citet{MT99} for a mean-square approximation of Euclidean SDEs in bounded domains. This is in contrast to the weak-sense convergence, where the Riemannian Langevin algorithm on compact $M$ has weak order $1$, and is agnostic to the presence or absence of cut points. 
\end{remark}}

\section{The Case of Non-compact Manifolds}
\label{sec:non-comp}

In this section we discuss ingredients needed to transfer the error analysis of the preceding section from compact to non-compact $M$. As mentioned in Section~\ref{sec:intro}, negative curvature encourages Brownian motion to drift away to infinity, and a uniform lower bound on the Ricci curvature is needed to address this. Additionally, since non-compactness is closely linked with non-positive curvature, when the Brownian motion has a drift component, as with an $\R^q$-valued Brownian motion, extra care is needed. 

The condition \eqref{eq:stoch complete} is sufficient to ensure stochastic completeness of a diffusion with generator $\mathcal A$ in \eqref{eq:operA} \citep{li2021stochastic}; recall that it is automatically satisfied for compact $M$. Hence here we state  \eqref{eq:stoch complete}  as the below assumption.
\begin{assumption}
  \label{assump:stochcomplete}
		There exists a $\kappa>0$ such that
		$$\Ric(v,v) + \Hess^\phi(v,v) \geq -\kappa g(v,v), \quad \text{for all }(x,v) \in TM.$$
\end{assumption}
Conditions in Assumption \ref{assump:stochcomplete} may be further relaxed \citep[Corollary 2.1.2]{Wang2013}, which, however, are harder to verify in practice. We note that Assumption 4 in the case of $M=\R^q$ resembles a one-sided Lipschitz condition which is sufficient to ensure regularity of SDEs in  $\R^q$ as well as finiteness of moments of their solutions \citep{khasminskii2011stochastic}.

Ergodicity, on the other hand, requires more. The \emph{Bakry-\`Emery criterion} \citep{BE85} demands the existence of a $\kappa>0$ such that
\begin{equation}\label{eq:bakry-emery}
	\Ric(v,v) + \Hess^\phi(v,v) \geq 2\kappa g(v,v)\,\, \text{ for all }\, (x,v) \in TM,
\end{equation}
which jointly controls both curvature of $M$ and drift of the Brownian motion, and is more restrictive than \eqref{eq:stoch complete}. For example, when $M=\R^q$, the criterion \eqref{eq:bakry-emery} reduces to $\Hess^\phi \geq 2\kappa g$ for $\kappa>0$, and implies that the density $p$ is $2\kappa$-strongly log-concave. The Bakry-\`Emery criterion provides a sufficient condition for exponential ergodicity of the diffusion \eqref{eq:diffusion} on non-compact $M$, but is redundant for compact $M$. Recall that by construction, $\mu_\phi$ is the invariant measure of the diffusion if it is ergodic. 

A few intermediate conditions signpost the route to exponential ergodicity of the Langevin diffusion that satisfies the Bakry-\`Emery criterion. The criterion ensures that the semigroup $\{P_t\}$ of operators corresponding to the diffusion satisfy the Log-Sobolev inequality with constant $2\kappa$  \citep{wang2009log}, which then implies that they satisfy the Poincar\'e inequality: $\mathrm{Var}_{\mu_\phi}(\varphi) \leq C\mathcal{E}(\varphi,\varphi)$ for a Dirichlet form $\mathcal{E}$ and constant $C$. As a consequence, one gets $|{P_t}|_{\op} \leq e^{-t/C}$, which implies exponential ergodicity. 

The Bakry-\`Emery criterion, however, is too stringent for our needs, as it precludes the possibility of sampling from a sizeable class of interesting probability measures on $M$. It is instead more natural to impose separate conditions on $\Ric$ and $\Hess^\phi$. The following assumption is sufficient for the semigroup of operators corresponding to the Langevin diffusion to satisfy the Log-Sobolev inequality \citep[Theorem 1.1]{wang2009log}.

\begin{assumption}\label{assump:ric}
		For constants $b,c>0$, 
  \begin{equation}\label{eq:ric}
  \Ric\geq (-c-b^2\rho_o^2)g,
  \end{equation}
  where $\rho_o=\rho(o,x)$, $o,x \in M$.
The Hessian 
\begin{equation}\label{eq:hess}
\Hess^\phi \geq \delta g
\end{equation}
outside of a compact set in $M$, where the constant $\delta$ satisfies $\delta>(1+\sqrt{2})b \sqrt{q-1}>0$.
\end{assumption}

In the case $M=\R^q$ the assumption (\ref{eq:hess}) is $\delta$-strong convexity condition to be satisfied outside of a compact set in $\R^q$ (which means that potential function can be locally non-convex):
\[
((x-y),\frac{1}{2}(\nabla \phi(x) - \nabla \phi(y))) \geq \delta |x-y|^2, \,\,\, \delta>0,
\]
which together with constant diffusion coefficient is sufficient for exponential ergodicity of the corresponding SDE  in $\R^q$ 
\rf{(where $\Ric = 0$)} \citep{khasminskii2011stochastic,RT96}. 

\subsection{Examples}

A few examples elucidate on Assumptions~\ref{assump:stochcomplete} 
and~\ref{assump:ric} for non-compact $M$. 

\begin{example}
	Let $M$ be  the \rf{Poincar\'e half plane} model $\H^q$ for the $q$-dimensional hyperbolic space with metric $g=\frac{1}{x_q^2}\sum_{i=1}^q \mathrm{d} x_i^2$, under which $M$ has negative sectional curvature equal to $-1$ everywhere. From the Cartan-Hadamard theorem, it is diffeomorphic to $\R^q$. Consider the Riemannian-Gaussian distribution with density $p(x) \propto \exp(-\frac{\rho(x,o)^2}{2\sigma^2})$, with respect to $\dvol$, where $o \in M$ is a fixed point and $\sigma>0$;  for brevity, let $\rho:=\rho(x,o)$ be the distance function to the fixed $o$. 
	
	The Hessian of the squared distance  satisfies $\Hess^{\rho^2} \geq 2g$ \citep{greene2006function}, and since $\H^q$ is an Einstein manifold, its Ricci curvature  $\Ric = -(q-1)g$ has a lower bound. Assumption \ref{assump:stochcomplete} is satisfied since $\Ric+\Hess^\phi \geq (\frac{1}{\sigma^2}-q+1) g$, ensuring stochastic completeness. Note that the lower bound need not be positive as $\sigma>0$ is unbounded, therefore the Bakry-\`Emery criterion may be violated for some values of $\sigma$. 
	
	The lower bound on the Hessian means that we can restrict attention to a geodesic ball cent\rf{er}ed at $o$ of finite radius to choose the values $c=(q-1)$, $b = ((1+\sqrt{2})\sigma^2\sqrt{q-1})^{-1/2}$ and $\delta = \frac{1}{\sigma^2}$ such that Assumption~\ref{assump:ric} is satisfied in tandem. Convergence exponentially fast to $p \thinspace \dvol$ is thus assured for any $\sigma>0$, including for values that violate the Bakry-\`Emery criterion (\ref{eq:bakry-emery}).
\end{example}

\begin{example}\label{example:spd}
	The space $M=\P_m$ consisting of $m\times m$ symmetric positive definite matrices with affine-invariant metric $g$ in \eqref{eq:affineinvariant} is a $q=m(m+1)/2-$dimensional manifold. As with $\H^q$, the space $\P_m$ with metric $g$ has a negative sectional curvature everywhere, which, however, unlike $\H^q$, is not constant. 
	Assumptions \ref{assump:stochcomplete} and \ref{assump:ric} 
 are verified for the following distributions, later used in Section \ref{sec:numpd}.
	
	\begin{enumerate}[label=(\roman*), leftmargin=*,itemsep=1em]
		\item Let $\phi(X)=\frac{1}{2\sigma^2} \rho(X,O)^2$ in \eqref{invmeasure} for $\sigma>0$ and fixed $O \in \P_m$. This choice of $\phi$ results in the Riemannian-Gaussian distribution (Section \ref{sec:numpd1}) on $\P_m$. It can be verified that $\Ric \geq -\frac{m}{4}g$, and by the Hessian comparison theorem $\Hess^{\rho^2} \geq 2g$ \citep[e.g.,][Section 4.4]{lewis2023contributions}. For $ \kappa = \left| \frac{1}{\sigma^2} - \frac{m}{4}\right|$, note that
		$$ \Ric+\Hess^\phi \geq \bigg(\frac{1}{\sigma^2} - \frac{m}{4}\bigg) g ,$$
		and Assumption~\ref{assump:stochcomplete} is satisfied. For (\ref{eq:ric}) in Assumption~\ref{assump:ric}, we use the lower bound on the Ricci curvature with $c=\frac{m}{4}$. The value of $b$ can be determined upon first calculating $\delta$ in (\ref{eq:hess}). Since $\Hess^\phi \geq \frac{1}{\sigma^2}g$, we select $\delta=\frac{1}{\sigma^2}$ resulting in an interval 
		$0<b < ((1+\sqrt{2})\sigma^2\sqrt{m-1})^{-1/2}$
		of possible values. Hence, Assumption~\ref{assump:ric} is satisfied. Note that the Bakry-\`Emery criterion (\ref{eq:bakry-emery}) is only satisfied for $\sigma<\sqrt{\frac{4}{m}}$.
		
		\item Let $\phi(X) = \rho(X,O)^4 - \rho(X,O)^2$ in \eqref{invmeasure} for fixed $O \in \P_m$. It will be shown in towards the end of this example that $\phi$ is geodesically non-convex as the Hessian is negative in some domain.
		To verify Assumption~\ref{assump:stochcomplete}, note that a lower bound on the Ricci curvature is available from (i). To compute the Hessian, consider first a general function $f \in C^2(\P_m)$. By the chain rule 
		$$ \Hess^{f(\rho^2)} = f''(\rho^2) \mathrm{d} \rho^2 \otimes \mathrm{d} \rho^2 + f'(\rho^2) \Hess^{\rho^2}. $$
		Then setting $f(x) = x^2$ results in
		$$ \Hess^{\rho^4} = 2\mathrm{d} \rho^2 \otimes \mathrm{d} \rho^2 + 2\rho^2 \Hess^{\rho^2},$$
		such that Hessian of $\phi$ is 
		$$ \Hess^\phi = D^2 \rho^4 - D^2 \rho^2 = 8\rho^2 \mathrm{d} \rho \otimes \mathrm{d} \rho + (2\rho^2-1)\Hess^{\rho^2} $$
		upon using the fact that $\mathrm{d} \rho^2 = 2\rho \mathrm{d} \rho$. Since $\Hess^{\rho^2}\geq 2g$ and $\mathrm{d} \rho\otimes \mathrm{d} \rho$ is a non-negative tensor:
		$$\Hess^\phi \geq 2(2\rho^2 - 1)\rf{g}.$$
		Thus, $\Ric+\Hess^\phi \geq \big(2(2\rho^2-1)-\frac{m}{4}\big)g$ and Assumption~\ref{assump:stochcomplete} is satisfied for $\kappa = \frac{m+8}{4}$. 
		Evidently, the Hessian is negative when $\rho <\frac{1}{\sqrt{2}}$, hence for the compact set we pick the closure of geodesic ball   $\mathcal B_o(1)$ of radius $1$ and cent\rf{e}red at $o$. Outside of this ball, the Hessian is strictly positive,  and choosing $\delta = 2$ ensures that (\ref{eq:hess}) in Assumption~\ref{assump:ric} is satisfied. This implies that choosing 
$b$ satisfying $0<b<(2(1+\sqrt{2})\sqrt{m-1})^{-1/2}$ takes care of Assumption \ref{assump:ric}. Non-convexity of $\phi$ can now be gleaned from the fact that inside of $\mathcal B_o(1/\sqrt{2})$ the Hessian is negative. To see this, we look at the Hessian contracted with vector fields $v$ such that $g(v,\nabla \rho) = 0$. Then
		$$ \Hess^\phi(v,v) = (2\rho^2-1)\Hess^{\rho^2}(v,v) \leq -2(1-2\rho^2)g(v,v) <0. $$
		
	\end{enumerate}
\end{example}

\subsection{Computational aspects}

When $M=\R^q$ it is known that Euler-type schemes (including explicit ones) converge when coefficients of the SDE are globally Lipschitz (implying that the coefficients grow no faster than linearly at infinity) \citep{milstein2004stochastic}. It is quite straightforward to extend the analysis presented in this paper in the case of compact manifolds to non-compact case under the globally Lipschitz assumption on $\nabla \phi$ and $g^{-1/2}$. This extension will require to prove uniform bounds for moments of $X^h_n$.

However, distributions $\phi$ and manifolds $(M,g)$ of practical interest are usually so that the globally Lipschitz assumption is violated. It is even so for Example~\ref{example:spd}(i) of the Riemannian-Gaussian distribution on $\P_m$. 

It is known for SDEs on $M=\R^q$ that when growth of coefficients at infinity is faster than linear, {\it explicit} Euler schemes can diverge \citep{MSH01,milstein2004stochastic,HJK11}. Reason for the divergence is exploding moments of the corresponding Markov chains (despite moments of the SDEs solution being bounded), typically due to a tiny number of exploding trajectories \citep{MT05,milstein2004stochastic}. 

By rejecting exploding trajectories, one can safely use any method of weak approximation for a broad class of SDEs with nonglobally Lipschitz coefficients to compute the ensemble averaging estimator $\hat \mu_{\phi,N}(\varphi)$ of $\mu_\phi(\varphi)$. SDEs from this class need to merely satisfy stochastic completeness and have sufficiently smooth coefficients, which SDEs of applicable interest typically do. The rejection technique relates to the second part of Assumption~\ref{assump:ric} (see (\ref{eq:hess})) in that one can view the condition as ignoring all sample paths of the diffusion that exit a sufficiently large geodesic ball $\mathcal B_o(R)$ for $R>0$ around a point $o \in M$. Theoretical justification of this when $M=\mathbb{R}^{q}$ case is given by \cite{MT05,MT07} (see also \cite{milstein2004stochastic}), and it is possible to transfer the proof to the setting of non-compact manifolds $M$. 

In Section~\ref{sec:numpd} we provide numerical evidence of how the rejection technique can profitably be used by choosing a large enough closed geodesic ball, as in Assumption \ref{assump:ric}, on $\P_m$ using the proposed Riemannian Langevin Monte Carlo algorithm. In particular, no trajectory was rejected for $R=2.7$ in the case of the Riemannian-Gaussian distribution (Section~\ref{sec:numpd1}) and $R=2$ for the non-convex potential (Section~\ref{sec:numpd2}). 

In $\R^q $, to address the difficulty with using the explicit Euler scheme for SDEs with non-globally Lipschitz coefficients in time-averaging estimators, one can use the Metropolis-adjusted Euler scheme \citep{BH13}.  The Metropolis adjustment also either removes the bias or makes it exponentially small, however it bears substantial additional computational cost. A Metropolis-adjusted version of the Riemannian Langevin algorithm treated in this paper requires a special separate consideration.

Alternatives to the explicit Euler, that result in bounded moments  \rf{and hence convergence}, have been studied in depth for SDEs in $\R^q$ with non-globally Lipschitz coefficients. Examples include implicit methods (and in particular, the implicit Euler scheme) and explicit methods of tamed and balanced  type \citep[e.g.,][]{MSH01,HJ15,TZ13,milstein2004stochastic}. Extensions of such methods to the non-compact manifold setting are possible and will be taken up elsewhere.

\section{Numerical Illustrations}\label{sec:exp}

We carry out numerical experiments to verify the theoretical results from Section~\ref{sec:bounds} for compact manifolds $M$, and demonstrate utility of the proposed algorithm for both compact and non-compact $M$. Section~\ref{sec:numMF} considers sampling using retractions from the von-Mises distribution on the two-dimensional unit sphere $\S^2$. \rf{A sampling algorithm using extrinsic embedding coordinates was proposed by \cite{wood1994simulation}, while \cite{fisher1993statistical} proposed an exact method using intrinsic coordinates for dimension $2$. A more recent development in \cite{kurz2015stochastic} provides an extrinsic algorithm without rejection that is exact for  even dimension spheres only.} 


Section~\ref{sec:numpd} considers two distributions on the non-compact manifold of symmetric positive definite matrices: the Riemannian-Gaussian distribution with convex $\phi$, and a distribution with non-convex potential. Versatility of the proposed algorithm to handle both convex and non-convex potential is demonstrated. 

We focus on the ensemble-averaging estimator $\hat \mu_{\phi,N}(\varphi)$ defined in \eqref{eq:est_ea}. To this end, we simulate a large number $L$ of \rf{independent} trajectories for a sufficiently long time \rf{$T=Nh$}, and then verify that it enjoys first order of convergence in terms of time step $h$ by comparing with the exact value of $\mu_\phi(\varphi)$ (obtained via numerical integration). Two types of errors are reported: 
the \rf{numerical integration} error
\[\texttt{err}=|\hat \mu_{\phi,N}(\varphi) - \mu_\phi(\varphi)|;\] 
and, the Monte Carlo error
\[
\texttt{MCerr}=\dfrac{1}{L-1}\bigg(\sum_{l=1}^L \varphi(X^{(l),h}_N)^2 - \dfrac{1}{L}\bigg(\sum_{l=1}^L \varphi(X^{(l),h}_N)\bigg)^2\bigg),
\]
with corresponding 95\% confidence interval 
\[
\hat \mu_{\phi,N}(\varphi) \pm 1.96 \sqrt{\dfrac{\texttt{MCerr}}{L}}. 
\]
We note that in a similar manner, the theoretical results (see Theorem~\ref{theorem4.2}) for the time-averaging estimator $\hat \varphi_{N}$ (\ref{eq:est_ta}) can also be verified.

Simulations were performed in \texttt{R} using a parallel architecture on 30 cores, using the packages \texttt{purrr} and \texttt{furrr} on a Supermicro 620U Linux RHEL8.8 server with 48 Intel Xeon (Ice Lake class) CPUs.

\subsection{Retraction-based sampling from the von-Mises Fisher distribution on $\mathbb S^2$}
\label{sec:numMF}
The unit sphere $\S^2$ is a compact manifold with constant positive sectional curvature $1$ everywhere, such that the cut locus of every point $x$ is its antipode $-x$. This means that a single chart cannot cover $\S^2$ in order to implement the proposed algorithm. Charts can be defined by embedding $\S^2$ into $\R^3$, and this is commonly done using the inclusion map as the embedding such that $\S^2=\{(x_1,x_2,x_3)^\intercal \in \R^3:x_1^2+x_2^2+x_3^2=1\}$, and two stereographic projections from the north and south poles onto $\R^2$ are used to define two charts which cover $\S^2$. 

We will instead consider spherical coordinates arising from the embedding of $\S^2$ into $\R^2$ given by $(r,\theta)^\intercal \mapsto (\sin r \cos \theta, \sin r \sin \theta, \cos r)^\intercal$, where $r$ is the geodesic distance to the north pole, with $r \in [0,\pi], \theta \in [0,2\pi]$. Our choice is motivated by the fact that expressions for the exponential and inverse-exponential maps are not available analytically in spherical coordinates with respect to the round metric $g$, and implementation of the proposed algorithm thus requires approximating the exponential map by numerically solving the geodesic equation. 
We thus consider a retraction of an appropriate order that preserves the order of errors derived in Section~\ref{sec:bounds} (see Section~\ref{sec:retractions}), and we verify this numerically.

The Riemannian metric for the spherical coordinates at a point $x=(r,\theta)^\intercal$ is
\begin{equation}\label{eq:gVMF}
	g=\diff r^2 + \sin^2r\, \diff \theta^2 ,
\end{equation}
which is obtained by pulling back the Euclidean metric from $\R^3$. The inverse metric tensor assumes the form
$$G^{-1} = 
\begin{pmatrix}
	1 & 0\\
	0 & \frac{1}{\sin^2 r}
\end{pmatrix},
$$
which is unbounded at $r=0$ or $r=\pi$ or equivalently, at $(0,0,1)^\intercal$ or $(0,0,-1)^\intercal$ in $\R^3$, and can lead to instability or loss of accuracy of the algorithm. The second chart is acquired by an orthogonal transformation of the coordinate axes obtained by permuting the axes corresponding to the embedding $(r,\theta)^\intercal \mapsto (\cos r, \sin r\cos \theta, \sin r \sin \theta)^\intercal$ with the metric unchanged. The transition map between the charts is then
\begin{equation}
	\label{eq:ch12}
	\psi(r,\theta) = (\arccos(\sin r \cos \theta),\mathrm{\arctan}(\cot r \csc \theta))^\intercal,
\end{equation}
which in $\R^3$ maps $(\sin r \cos \theta,\sin r \sin \theta, \cos r)^\intercal \mapsto (\cos r,\sin r \cos \theta,\sin r \sin \theta)^\intercal $; its inverse is 
\begin{equation}\label{eq:ch21}
	\psi^{-1}(\Tilde{r},\Tilde{\theta}) = (\arccos(\sin \Tilde{r} \sin \Tilde{\theta}),\mathrm{\arctan}(\tan \Tilde{r}\cos \Tilde{\theta}))^\intercal.
\end{equation}

Numerically, we restrict the domain of the embeddings to  $(\epsilon,\pi-\epsilon)\times[0,2\pi]$ for some $\epsilon>0$, whence the union of the two charts covers $\S^2$ as the singular points $r=0$ and $r=\pi$ in chart 1 are mapped to $(\pi/2,\pi/2)$ and $(\pi/2,3\pi/2)$, respectively, for any $\theta \in [0,2\pi]$. We choose $\epsilon=0.5$ in the simulations, which ensures that $G^{-1}$ is bounded.

The  cent\rf{e}red von-Mises Fisher distribution $\mu_\phi$ on $\S^2$ is the distribution
\begin{equation}\label{eq:vmf}
	\diff \mu_\phi \propto e^{\lambda \cos r} \dvol, \quad \mathrm{for}\; \lambda>0,\; r \in [0,\pi],
\end{equation}
so that $\phi((r,\theta)^\intercal)=\rf{-}\lambda \cos r$; performance of the algorithm for a non-cent\rf{e}red version can be examined with obvious modifications. It was verified in Example~\ref{exmp:vmf} that the Langevin diffusion \eqref{eq:diffusion} with $\phi((r,\theta)^\intercal) =- \lambda \cos r$ is ergodic, and converges exponentially fast to its invariant measure $\mu_\phi$.

The geodesic equation \eqref{eq:geodesic eq} can be written as the following system of ODEs in local coordinates starting at $(r_0,\theta_0)^\intercal$:
\begin{align}
	\dot{r}&=y, \,\, r(0)=r_0,\label{eq:goedS2}\\
	\dot{\theta}&=z, \,\,\ \theta(0)=\theta_0,\notag\\
	\dot{y}&=\sin r \cos r \thickspace z^2, \,\,\, y(0)=v^1,\notag\\
	\dot{z}&=-2\cot r \thickspace yz, \,\, z(0)=v^2. \notag
\end{align}
Since an exponential map in closed form is not available for this example, we use the retraction based on solving the above ODEs by a single step of size $\sqrt{h}$ of  the standard $(1/6,1/3,1/3,1/6)$ 4th-order Runge-Kutta method (RK4) as explained in Section~\ref{sec:retractions}.

\begin{algorithm}
	\small
	\caption{Sample from a von-Mises Fisher distribution on $\S^2$ using a retraction.  \label{algorithm2.1}}
	
	\begin{enumerate}
		
		\item Input $\lambda,\;h,\;N,\;X_0$ 
		\item  Initialise the chain at $X_0=(r_0,\theta_0)^\intercal$ according to the first chart and set chart counter $c=1, \epsilon=0.5$ and $n = 0$. 
		\item If $r_{n}<\epsilon$ or $r_{n}>\pi-\epsilon$: \\
		if $c=1$, change the chart according to (\ref{eq:ch12}) and set $c=2$; \\
		if $c=2$, change the chart according to (\ref{eq:ch21}) and set $c=1$.
		\item  Generate $\xi_{n+1}=(\xi^1_{n+1}, \xi^2_{n+1})^\intercal$, where $\xi^i_{n+1}$ are independent, from \eqref{eq:xi2}.
		
		\item Construct the tangent vector:\\
		if $c=1$, set $v_{n+1} = -\frac{\lambda \sqrt{h}}{2}\sin r_n \,(1,0)^\intercal + g^{-1/2}(X_n^h)\xi_{n+1}$; \\
		if $c=2$, set $v_{n+1} = \frac{\lambda \sqrt{h}}{2}(\cos r_n \sin \theta_n,\csc r_n \cos \theta_n)^\intercal + g^{-1/2}(X_n^h)\xi_{n+1}$.
		
		\item Approximate the geodesic equation (\ref{eq:goedS2}) with $(r_0,\theta_0)^\intercal=(r_n,\theta_n)^\intercal$, $(v^1,v^2)^\intercal =(v^1_{n+1},v^2_{n+1})^\intercal$ by a single step of size $\sqrt{h}$ of RK4 to obtain $X_{n+1}^h=(r_{n+1},\theta_{n+1})^\intercal$.
		\item \textbf{If} $n+1=N$ then \textbf{stop} and if $c=2$, change $X_N^h$ to first chart according to (\ref{eq:ch12}), \textbf{else} put $n := n+1$ and \textbf{return} to Step 3.
	\end{enumerate}
	
\end{algorithm}

\rf{For the testing purposes, we} take $\varphi(x)=\sin r$ and compute, for $\lambda=1$, the exact value $\mu_\phi(\varphi) = \sqrt{2} \frac{I_1(1)\Gamma(3/2)}{I_{1/2}(1)}\rf{\approx 0.7554024361}$, where  $I_n(\lambda)$ is the modified Bessel function of the first kind  \cite[Section~4.1]{lewis2023contributions}. The algorithm was terminated at time $T=5$, sufficient to ensure that the error $|\mathbb{E}(\varphi(X(T)))$ $-\mu_\phi(\varphi) |$ (i.e., the error due to proximity of the SDE's solution to the ergodic limit) is negligibly small. From Table~\ref{table:vmf} and the black line in Figure~\ref{plot:vmf}, we observe the first order convergence of Algorithm~\ref{algorithm2.1} as expected.

\begin{table}
	\caption{\small Estimation \rf{error $\texttt{err}$} and Monte Carlo error \rf{$\texttt{MCerr}$}  for sampling from a von-Mises Fisher distribution \rf{depending on the time step $h$ and number of independent realisations $L$}. The parameters are $\lambda = 1$, $X_0=(r_0,\theta_0)^\intercal=(\pi/4,\pi/4)^\intercal$ and $T = 5$.}
	\label{table:vmf} 
	\centering
	
	\begin{tabular}{c c c c c} 
		\hline\hline 
		$h$ & $L$ &  $\texttt{err}$ & \texttt{MCerr} \\ [0.5ex] 
		\hline 
		0.2 & $10^{6}$  & 0.0239 & 0.0004\\
		0.1 & $10^{6}$  & 0.0068 & 0.0004\\
		0.05 & $10^{6}$ & 0.0029 & 0.0005 \\
		0.025 & $10^{7}$ & 0.0014 & 0.0001  \\
		0.0125 & $10^7$ &  0.00065 & 0.00015   \\
		0.01 & $10^7$ &  0.00048 & 0.00015  \\ 
		\hline 
	\end{tabular}
	
\end{table}

\begin{figure}
	\centering
	\begin{tikzpicture}
		\begin{axis}[
			xlabel = $h$,
			ylabel = {\texttt{err}},
			log ticks with fixed point,
			xmode=log,
			ymode=log,
			xmin=0,
			xmax=0.3,
			scaled ticks=false,
			xticklabel style={
				/pgf/number format/precision=2,
				/pgf/number format/fixed,
				/pgf/number format/fixed zerofill,},
			yticklabel style={
				/pgf/number format/precision=2,
				/pgf/number format/fixed,
				/pgf/number format/fixed zerofill,},
			]
			\addplot+ [
			color=black,
			mark=o,
			error bars/.cd,y dir=both,y explicit,
			] coordinates {
				(0.2,0.02387048) +- (0,0.0004071002)
				(0.1,0.006840862) +- (0,0.0004481559)
				(0.05,0.002946046) +- (0,0.0004562242)
				(0.025,0.001485977)  +- (0,0.0001451195)
				(0.0125,0.0006492769)  +- (0,0.0001455306)
				(0.01,0.0004767473)  +- (0,0.0001456423)
			};
			\addplot[
			color=blue  ,
			mark=o,
			error bars/.cd,y dir=both,y explicit,
			] coordinates {
				(0.2,0.006798711) +- (0,0.0004593063)
				(0.1,0.003193959) +- (0,0.0004590083)
				(0.05,0.001251143) +- (0,0.0004601591)
				(0.025,0.0003581966)  +- (0,0.0001458583)
				(0.0125,0.000109982)  +- (0,0.00004614345)
			};
			
			\addplot[color=red,thick, samples=50,smooth,domain=0:6,dashed] coordinates {(0.01,0.01)(0.2,0.2)};
			
		\end{axis}
	\end{tikzpicture}
	\caption{\small Sampling from a von-Mises Fisher distribution. A log-log plot of the error $\texttt{err}$ against $h$. The black (blue) line corresponds to the Algorithm~\ref{algorithm2.1} with $\xi_n$ distributed according to \eqref{eq:xi} (standard Gaussian distribution). Error bars correspond to Monte Carlo error $\texttt{MCerr}$. The reference red line has gradient 1. The parameters are as in Table~\ref{table:vmf}.}
	\label{plot:vmf}
\end{figure}
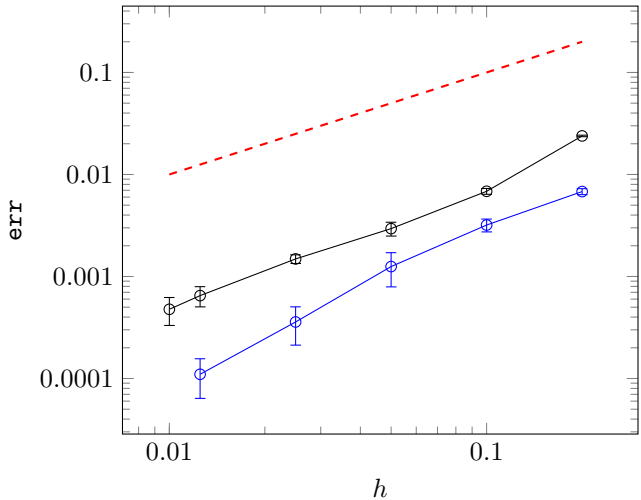

We also report results from an experiment with the modified Algorithm~\ref{algorithm2.1} in which the discrete random variables $\xi$  distributed according to \eqref{eq:xi2} are replaced with standard Gaussian random variables. The corresponding results given in Table~\ref{table:vmf gaussian} and the blue line in Figure~\ref{plot:vmf} show first order convergence of Algorithm~\ref{algorithm2.1} with Gaussian random variables. Interestingly, the error is smaller in the case of Gaussian random variables than when the discrete random variables are used.

\begin{table}
\caption{\small Sampling from a von-Mises Fisher distribution (Algorithm~\ref{algorithm2.1} with Gaussian $\xi$). The parameters are as in Table~\ref{table:vmf}.}
	\label{table:vmf gaussian}
	\centering
	
	\begin{tabular}{c c c c c} 
		\hline\hline 
		$h$ & $L$ & $\texttt{err}$ & \texttt{MCerr} \\ [0.5ex] 
		\hline 
		0.2 & $10^{6}$ & 0.0068 & 0.0005  \\
		0.1 & $10^{6}$ & 0.0032 & 0.0005  \\
		0.05 & $10^{6}$ & 0.0013 & 0.0005  \\
		0.025 & $10^{7}$ & 0.00036 & 0.00015  \\
		0.0125 & $10^8$ & 0.00011 & 0.00005   \\ 
		\hline 
	\end{tabular}

\end{table}

\rf{A further experiment was implemented in \texttt{Julia} using GPU kernels and run on an NVIDIA A100 GPU to compute the second order estimate of $\mu_\phi(\varphi)$ using the Talay-Tubaro extrapolation method detailed in Section~\ref{sec:TT}. For the initial first-order estimates, the experiment was repeated with the time steps $h=0.1,0.08,0.0625,0.05$ and with parameters $T=10$ and $L=5\times 10^9$ ($N_1 = 100,\,N_2=125,\,N_3=160,\,N_4=200 $). The respective first-order estimates of $\mu_{\phi}(\varphi)$ were $\hat\mu_{\phi,N_1}(\varphi) = 0.762017\pm0.000006 $ (\texttt{err} $ \doteq 0.006615
$), $\hat\mu_{\phi,N_2}=0.760524\pm 0.00006$ (\texttt{err} $ \doteq 0.005122$), $\hat\mu_{\phi,N_3}(\varphi) = 0.759317\pm0.000006 $ (\texttt{err}$ \doteq 0.003915$) and $\hat\mu_{\phi,N_4}(\varphi) = 0.758523\pm0.000006 $ (\texttt{err} $ \doteq 0.003121$). Then, using (\ref{Db236}) and the above data, the three second order estimates of $\mu_{\phi}(\varphi)$ are:
$\hat\mu_{\phi, imp}(\varphi) = 0.754552 \pm 0.000041$ (\texttt{err} $ \doteq 0.00085
$) based on $\hat\mu_{\phi,N_1}(\varphi)$ and $\hat\mu_{\phi,N_2}(\varphi)$; $\hat\mu_{\phi, imp}(\varphi) = 0.755006 \pm 0.000037$ (\texttt{err} $ \doteq 0.000396
$) based on $\hat\mu_{\phi,N_2}(\varphi)$ and $\hat\mu_{\phi,N_3}(\varphi)$; and $\hat\mu_{\phi, imp}(\varphi) = 0.755348 \pm 0.000041$ (\texttt{err} $ \doteq 0.000054
$) based on $\hat\mu_{\phi,N_3}(\varphi)$ and $\hat\mu_{\phi,N_4}(\varphi)$. 
We see that the Talay-Tubaro extrapolation improves the accuracy as expected. 
The combined computation time of this experiment was about 3000 seconds thanks to an effective use of GPU.   }

\subsection{Sampling on the manifold of SPD matrices}
\label{sec:numpd}
We consider the non-compact manifold of symmetric positive definite (SPD) matrices, and demonstrate numerically that the proposed Riemannian Langevin algorithm results in the desired error bounds when sampling from two distributions which satisfy the assumptions discussed in Section \ref{sec:non-comp}. In contrast to the unit sphere in Section \ref{sec:numMF}\rf{,} the algorithm is implemented using a closed form expression for the exponential map. 

Consider the general linear group $GL(m)$ consisting of the set of $m\times m$ real invertible matrices  equipped with the group operation of matrix multiplication. 
Denote by $M=\P_m$, the manifold of $m \times m$ symmetric positive definite matrices $\P_m=\{X \in GL(m) : X^\intercal = X ,\;y^\intercal X y>0, \; y \in \R^m\rf{\setminus \{0\}}  \}$, an open cone of dimension $q=m(m+1)/2$.

The tangent space $T_X \P_m = \{X\} \times \mathcal{S}_m$, where $\mathcal{S}_m$ is the set of $m \times m$ real symmetric matrices. Various choices of metric are available on $\P_m$ \citep{pennec2006riemannian}, and we choose the affine invariant metric 
\begin{equation}
	\label{eq:affineinvariant}
	g(U,V)_X = \Tr(X^{-1}UX^{-1}V), \quad U,V \in T_X \P_m,
\end{equation}
which is invariant under the transformation  $X \mapsto A^{-1} X A$, for every $A \in GL(m)$. The metric is compatible with the transitive action of $GL(m)$ on $\P_m$ in that between any pair $X_1, X_2 \in \P_m$ there exists an $A \in GL(m)$ such that $X_2=A^{-1}X_1A$, which makes $\P_m$ a homogeneous space of $GL(m)$.

Affine invariance of the metric ensures that the Riemannian distance $\rho$ and volume form $\dvol$ will also be invariant under affine transformations. The metric $g$ gives $\P_m$ a Riemannian structure with non-positive sectional curvature, and $\P_m$ is a non-compact, geodesically complete metric space by the Hopf-Rinow theorem \citep[e.g.,][]{jost2008riemannian}. Moreover, by the Cartan-Hadamard theorem \citep[e.g.,][]{kobayashi1969foundations}, the inverse exponential map is globally defined, and the cut locus of every point is empty, i.e., every two points in $\P_m$ have a unique geodesic connecting them. This means that only a single chart is needed to cover $\P_m$. 

The volume form at a point $X\in\P_m$ with affine-invariant metric $g$ is
$$ \dvol = \det(X)^{(m+1)/2}\diff x_1 \ldots \diff x_{m(m+1)/2},  $$
where $x_1,\ldots,x_{m(m+1)/2}$ are the upper triangular elements of $X$. For $X \in \P_m$ and $S \in \mathcal{S}_m$, the exponential map has the closed form 
\begin{equation}\label{eq:spd exp map}
	\exp_X(tS) = X^{1/2}\mathrm{Exp}( tX^{-1/2}SX^{-1/2})X^{1/2},
\end{equation}
where $\mathrm{Exp}$ is the matrix exponential. A geodesic that connects two points $X_1,X_2 \in \P_m$ can be parameterised as
$$ \gamma(t) = X_1^{1/2}\mathrm{Exp}( t \mathrm{Log}(X_1^{-1/2}X_2 X_1^{-1/2}))X_1^{1/2} $$
so that $\gamma(0) = X_1 $ and $\gamma(1) = X_2$, where $\mathrm{Log}$ is the matrix logarithm defined globally on $\P_m$. The distance between two points $X_1$ and $X_2$ is
$$ \rho(X_1,X_2) = \sqrt{\sum_{i=1}^\rf{m} (\log r_i)^2}, $$
where $\{r_i\}_{i=1}^m$ are the eigenvalues of $X_1^{-1}X_2$. The inverse exponential map can also be written in the closed form:
\begin{equation}\label{eq:spd inv exp}
	\Exp_{X_1}^{-1 }(X_2) = \dot{\gamma}(0) =  X_1^{1/2} \mathrm{Log}(X_1^{-1/2}X_2X_1^{-1/2})X_1^{1/2}. 
\end{equation}

For efficient use of the coordinates, we consider the vectorization and half-vectorization maps.  With $\vvec: \mathcal S_m \rightarrow \mathbb{R}^{m^2}$ as the vectorization map taking a symmetric matrix to a column vector, consider the half-vectorization map 
\[
\halfvec:\mathcal S_m \to \mathbb R^{q}, \quad \halfvec(x):=B \vvec(x), 
\]
where the matrix 
\[
B:=\sum_{i \geq j}(u_{ij}\otimes e_j^\intercal \otimes e^\intercal_i) \in \mathbb R^{q \times m^2}
\]
picks out the lower triangular part of the vectorization,
and $u_{ij}$ is a $q$-dimensional unit vector with 1 in position $(j-1)m+i-j(j-1)/2$ and 0 elsewhere, and $e_{i}$ is the standard basis of $\mathbb R^{m^2}$.  Its inverse 
\[
\mathbb R^{q}  \ni x \mapsto  \halfvec^{-1}(x):=(\halfvec(I_m)^\intercal \otimes I_m)(I_m \otimes x) \in \mathbb R^{m \times m}
\]
exists through the Moore-Penrose inverse of $A$. 

For the experiments, we fix $m=3$ and consider $\P_3$ with global coordinates $x=(x_1,x_2,x_3,x_4,x_5,x_6)$ such that
$$X = \left(
\begin{array}{ccc}
	x_1 & x_4 & x_6 \\
	x_4 & x_2 & x_5 \\
	x_6 & x_5 & x_3 \\
\end{array}
\right).$$
The metric tensor $G$ is expressed as $G=\diff x^\intercal(X^{-1}\otimes X^{-1})\diff x$, where $\diff x:=\halfvec (\diff X)$ with $\diff X=(\diff x_i)$. 
The inverse metric $G^{-1}$ can be concisely expressed \citep[e.g.,][]{moakher2011riemannian}:
$$ G^{-1}(X)=\left(
\begin{array}{cccccc}
	x_1^2 & x_4^2 & x_6^2 & x_1 x_4 &
	x_4 x_6 & x_1 x_6 \\
	x_4^2 & x_2^2 & x_5^2 & x_2 x_4 &
	x_2 x_5 & x_4 x_5 \\
	x_6^2 & x_5^2 & x_3^2 & x_5 x_6 &
	x_3 x_5 & x_3 x_6 \\
	x_1 x_4 & x_2 x_4 & x_5 x_6
	& \frac{1}{2} \left(x_1 x_2+x_4^2\right) &
	\frac{1}{2} (x_2 x_6+x_4 x_5) &
	\frac{1}{2} (x_1 x_5+x_4 x_6) \\
	x_4 x_6 & x_2 x_5 & x_3 x_5
	& \frac{1}{2} (x_2 x_6+x_4 x_5) &
	\frac{1}{2} \left(x_2 x_3+x_5^2\right) &
	\frac{1}{2}(x_3 x_4+x_5 x_6) \\
	x_1 x_6 & x_4 x_5 & x_3 x_6
	& \frac{1}{2} (x_1 x_5+x_4 x_6) &
	\frac{1}{2} (x_3 x_4+x_5 x_6) &
	\frac{1}{2} \left(x_1 x_3+x_6^2\right) \\
\end{array}
\right),$$
which must be written as a column vector in $\R^6$ using the $\halfvec^{-1}$ map for use in the algorithms.

\subsubsection{Riemannian-Gaussian distribution}\label{sec:numpd1}

The Riemannian-Gaussian distribution can be viewed as the generalisation of the isotropic normal distribution on $\R^q$ to $\P_m$, where the Euclidean distance is replaced by the Riemannian distance in the probability density function:
$$ 
\diff \mu_{\phi}(X) = \dfrac{1}{Z_m(\sigma)}e^{-\phi (X)}\dvol, \, X \in \P_m, $$ where
\begin{equation}\label{eq:rg phi}
	\phi (X) = \frac{1}{2\sigma^2}\rho(X,O)^2
\end{equation}
for a fixed $O \in \P_m$ and parameter $\sigma>0$. The function $x \mapsto \phi(x)$ is strictly convex, since, by the Hessian comparison theorem \citep{cheeger1975comparison} the map $x \mapsto \rho(x,\cdot)^2$ is geodesically convex on a manifold with everywhere non-positive sectional curvature. 

The normalisation constant $Z_m(\sigma)$ can be evaluated by employing spectral decomposition $X = Q^\intercal \mathrm{diag}(e^{\lambda_1},\ldots,e^{\lambda_m})Q$, with eigenvalues $\lambda_1\geq \cdots\geq \lambda_m>0$, and computing the following integral 
\begin{equation}\label{eq:rg normalisation}
	c_m \int_{\R^m} e^{- \frac{1}{2\sigma^2}[\lambda_1^2 + \cdots + \lambda_m^2]} \prod_{i<j}\sinh\bigg(\dfrac{|\lambda_i-\lambda_j|}{2}\bigg)\diff \lambda_1 \cdots \diff \lambda_m,
\end{equation}
where $c_m = \frac{1}{m!}\frac{\pi^{m^2/2}}{\Gamma_m(m/2)}8^{m(m-1)/4}$ and $\Gamma_m$ is the multivariate gamma function \citep{said2017riemannian}; the constant $c_m$ is part of the volume form $\dvol$, and when $m=3$, $c_3 = \frac{16 \sqrt{2}\pi^2}{3}$. Without loss of generality we set $O=I_m$, since $\P_m$ is a homogeneous space. For convenience, we use $\rho_I=\rho_I(X):=\rho(X,I_m)$.

\begin{table}[!htb]
\caption{\small Estimation and Monte Carlo errors for the Riemannian–-Gaussian distribution. The parameters are $O = I_3$, $\sigma=\frac{1}{\sqrt{2}}$, $X_0=\halfvec^{-1}((2,4,2,1,1,0)^\intercal)$ and $T=10$.}    \label{table:rg}   
	\centering
	\begin{tabular}{c c c c c} 
		\hline\hline 
		$h$ & $L$ & $\texttt{err}$ & \texttt{MCerr} \\ [0.5ex] 
		\hline 
		0.2 & $10^{6}$ & 0.148 & 0.008  \\
		0.1 & $10^{6}$  & 0.078 & 0.008 \\
		0.05 & $10^{7}$ & 0.035 & 0.002  \\
		0.025 & $10^7$  & 0.017 & 0.002  \\
		\hline 
	\end{tabular}
	 
\end{table}

To our knowledge, currently, only a rejection-based algorithm for sampling from Riemannian-Gaussian on $\P_m$, which relies on sampling the eigenvalues, is available in the literature \citep{said2017riemannian}. Our Riemannian Langevin algorithm given in Algorithm~\ref{algorithm2.3} does not contain a rejection step; however, a rejection step can in principle be incorporated if needed (for example, to be used as a proposal distribution within MCMC). 

The Riemannian-Gaussian distribution satisfies Assumptions \ref{assump:stochcomplete} and \ref{assump:ric}, 
as seen in Example \ref{example:spd} (i). This ensures that the Langevin diffusion  converges to its invariant measure exponentially fast. From \eqref{eq:spd inv exp} the gradient of $\phi$ is
$$\nabla\phi(X) = - \dfrac{\exp_X^{-1}(I)}{\sigma^2} = \dfrac{X^{1/2}\mathrm{Log}(X)X^{1/2}}{\sigma^2}, $$ which grows at infinity as $x\log x$; also note that $\nabla \phi(I) = 0$.

\begin{algorithm}
	\small 
	\caption{Algorithm to sample from Riemannian-Gaussian distribution on $\P_3$ with $O=I_3$, $\sigma=\frac{1}{\sqrt{2}}$ and $X_0=\halfvec^{-1}((2,4,2,1,1,0)^\intercal)$.\label{algorithm2.3}}

	\begin{enumerate}
		\item Input $h,\;N$
		
		\item Initialise the chain at $X_0$ and set  $k = 0$.
		
		\item Generate $\xi_k=(\xi^1_k,\xi^2_k,\xi^3_k,\xi^4_k,\xi^5_k,\xi^6_k)^\intercal$, where, for every $k$, $\xi^i_k$ are independent from \eqref{eq:xi2}.

		\item Compute the matrix $M_k = -\frac{h}{2\sigma^2}\mathrm{Log}(X_k^h) + \sqrt{h}(X_k^h)^{-1/2}\halfvec^{-1}(G^{-1/2}(X_k^h)\xi_k)(X_k^h)^{-1/2}$. 
		
		\item Set $X_{k+1}^h=(X_k^h)^{1/2}\mathrm{Exp}(M_k)(X_k^h)^{1/2}$. 
		
		\item \textbf{If} $k+1=N$ then \textbf{stop}, \textbf{else} put $k := k+1$ and \textbf{return} to Step 3.

	\end{enumerate}
\end{algorithm}
The function $\varphi: \P_3 \to \R$ we consider to assess accuracy is $ \varphi (X) = \mathrm{det}(X)$. The value of 
$\mu_{\phi}(\varphi)\approx 2.11699998$, evaluated with accuracy of order of $10^{-8}$ using the function \texttt{NIntegrate} in Mathematica. Table~\ref{table:rg} and Figure~\ref{plot:rg} demonstrate that Algorithm \ref{algorithm2.3} exhibits the first order convergence.

\begin{figure}
	\centering
	\begin{tikzpicture}
		\begin{axis}[
			xlabel = $h$,
			ylabel = {\texttt{err}},
			log ticks with fixed point,
			xmode=log,
			ymode=log,
			xmin=0,
			xmax=0.3,
			scaled ticks=false,
			xticklabel style={
				/pgf/number format/precision=2,
				/pgf/number format/fixed,
				/pgf/number format/fixed zerofill,},
			yticklabel style={
				/pgf/number format/precision=2,
				/pgf/number format/fixed,
				/pgf/number format/fixed zerofill,},
			]
			\addplot+ [
			color=black,
			mark=o,
			error bars/.cd,y dir=both,y explicit,
			] coordinates {
				(0.2,0.1475338) +- (0,0.008194115)
				(0.1,0.07838471) +- (0,0.007993021)
				(0.05,0.03491928)  +- (0,0.00248595)
				(0.025,0.01700289)  +- (0,0.002468861)
			};
			
			\addplot[color=red,thick, samples=50,smooth,domain=0:6,dashed] coordinates {(0.0125,0.0125)(0.2,0.2)};
			
		\end{axis}
	\end{tikzpicture}
	\caption{\small Sampling from the Riemannian–-Gaussian distribution on $\P_3$: log-log plot of the estimation error $\texttt{err}$ against $h$. Error bars correspond to Monte Carlo error \texttt{MCerr}. The reference red line has gradient 1. The parameters for the Riemannian-Gaussian are as in Table~\ref{table:rg}. }
	\label{plot:rg}
\end{figure}
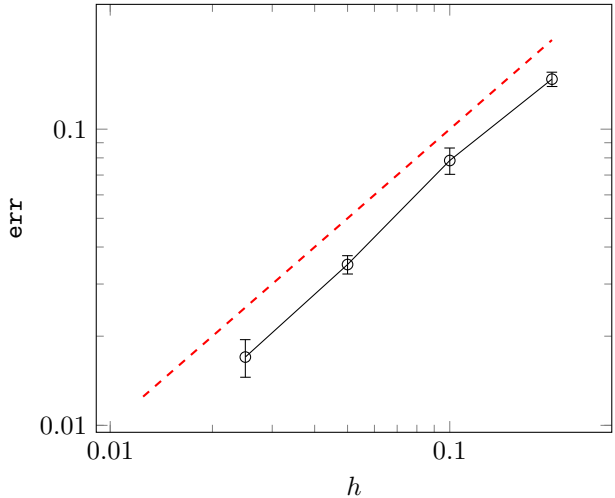

\subsubsection{Distribution with non-convex potential}\label{sec:numpd2}
The second part of Assumption~\ref{assump:ric} (see (\ref{eq:hess})) makes it evident that we need not have log-concavity of the density function to guarantee ergodicity of the SDE. We verify this and demonstrate generality of our algorithm by considering the following distribution with a non-convex potential $\phi$:
$$ \diff \mu_\phi = \dfrac{1}{Z_m} e^{-\phi(X)} \dvol,\quad X \in \P_m,$$
where 
$$\phi(X) = \rho(X,O)^4 - \rho(X,O)^2,$$
with the normalisation constant $Z_m$. This distribution is referred to as one with a double-well potential.
Similar to the Riemannian-Gaussian distribution, the constant $Z_m$ is obtained by evaluating
$$c_m \int_{\R^m} e^{-\left([\lambda_1^2 + \cdots + \lambda_m^2]^2- [\lambda_1^2 + \cdots + \lambda_m^2]\right)}\prod_{i<j}\sinh\bigg(\dfrac{|\lambda_i-\lambda_j|}{2}\bigg)\diff \lambda_1 \cdots \diff \lambda_m, $$
where $c_m$ is as in \eqref{eq:rg normalisation}. 

Example \ref{example:spd} (ii) verified that $\phi(X)$ satisfied Assumptions~\ref{assump:stochcomplete} and \ref{assump:ric}. 
The corresponding Langevin diffusion \eqref{eq:diffusion} converges exponentially fast to the invariant distribution $\mu_\phi$. The gradient takes the form
$$ \nabla\phi(X) = -4\rho(X,I_m)^2\exp_X^{-1}(I_m)+2\exp_X^{-1}(I_m) = (4\rho(X,I_m)^2-2)X^{1/2}\mathrm{Log}(X)X^{1/2}. $$

\begin{algorithm}
	\caption{Algorithm to sample from double-well potential distribution on $\P_3$ with $O=I_3$ and $X_0=\halfvec^{-1}((2,4,2,1,1,0)^\intercal)$.\label{alg:double-well}}
	\small 
	
	\begin{enumerate}
		\item Input $h,\;N$.

		\item Initialise the chain at $X_0$ and set  $k = 0$.
		
		\item Generate $\xi_k=(\xi^1_k,\xi^2_k,\xi^3_k,\xi^4_k,\xi^5_k,\xi^6_k)^\intercal$, where, for every $k$, $\xi^i_k$ are independent from \eqref{eq:xi2}.

		\item Compute the matrix 
		\begin{equation*}
			M_k = h(1-2\rho(X_k^h,I)^2)\mathrm{Log}(X_k^h) + \sqrt{h}(X_k^h)^{-1/2}\halfvec^{-1}(G^{-1/2}(X_k^h)\xi_k)(X_k^h)^{-1/2}. 
		\end{equation*}
		\item Set $X_{k+1}^h=(X_k^h)^{1/2}\mathrm{Exp}(M_k)(X_k^h)^{1/2}$. 
		
		\item \textbf{If} $k+1=N$ then \textbf{stop}, \textbf{else} put $k := k+1$ and \textbf{return} to Step 3.
		
	\end{enumerate}
\end{algorithm}

We choose $ \varphi (X) = 1/(1+\mathrm{tr}(X))$ and $\mu_\phi(\varphi) \approx 0.2204801571878534$, evaluated with accuracy of order of $10^{-8}$ using the function \texttt{NIntegrate} in Mathematica. 
Table~\ref{table:double-well} and Figure~\ref{plot:double-well} show that Algorithm \ref{alg:double-well} exhibits the first order convergence.

\begin{table}
	\caption{The double-well potential distribution. The parameters are $O = I_m$,
		$X_0=\halfvec^{-1}((2,4,2,1,1,0)^\intercal)$ and $T=5$.} \label{table:double-well} 
	\centering
	
	\begin{tabular}{c c c c c} 
		\hline\hline 
		$h$ & $L$ & $\texttt{err}$ & \texttt{MCerr} \\ [0.5ex] 
		\hline 
		0.2 & $10^{5}$ & 0.00537 & 0.0003  \\
		0.1 & $10^{5}$  & 0.00162 & 0.0003 \\
		0.05 & $10^{7}$ & 0.000605 & 0.00003  \\
		0.025 & $10^7$  & 0.000258 & 0.00003  \\
		\hline 
	\end{tabular}
	
\end{table}

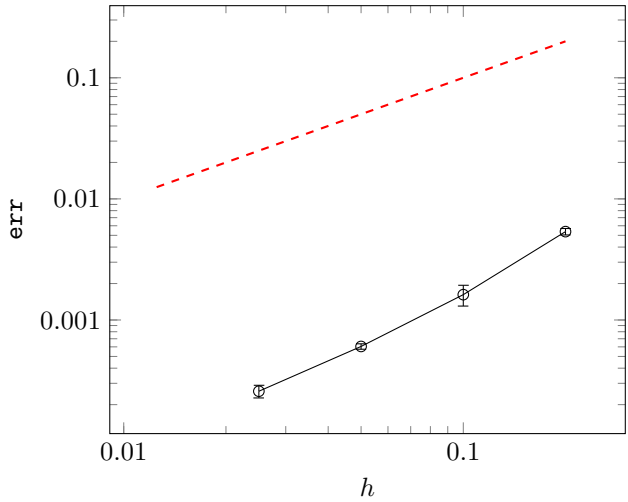
\begin{figure}
	\centering
	\begin{tikzpicture}
		\begin{axis}[
			xlabel = $h$,
			ylabel = {\texttt{err}},
			log ticks with fixed point,
			xmode=log,
			ymode=log,
			xmin=0,
			xmax=0.3,
			scaled ticks=false,
			xticklabel style={
				/pgf/number format/precision=2,
				/pgf/number format/fixed,
				/pgf/number format/fixed zerofill,},
			yticklabel style={
				/pgf/number format/precision=2,
				/pgf/number format/fixed,
				/pgf/number format/fixed zerofill,},
			]
			\addplot+ [
			color=black,
			mark=o,
			error bars/.cd,y dir=both,y explicit,
			] coordinates {
				(0.2,0.005366134) +- (0,0.0003292115)
				(0.1,0.001619112) +- (0,0.0003160915)
				(0.05,0.0006047)  +- (0,0.00003108402)
				(0.025,0.000258065)  +- (0,0.00003090649)
			};
			
			\addplot[color=red,thick, samples=50,smooth,domain=0:6,dashed] coordinates {(0.0125,0.0125)(0.2,0.2)};
			
		\end{axis}
	\end{tikzpicture}
	\caption{\small Sampling from a distribution with non-convex `double-well' potential on $\P_3$: log-log plot of the estimation error $\texttt{err}$ against $h$. Error bars correspond to Monte Carlo error \texttt{MCerr}. The reference red line has gradient 1. The parameters are as in Table~\ref{table:double-well}.}
	\label{plot:double-well}
\end{figure}

\section{Discussion and Concluding Remarks}

In this paper we have studied how the intrinsic Langevin diffusion can be used for sampling on Riemannian manifolds. To this end we have analyzed and provided error estimates for intrinsic Riemannian Langevin algorithms including those using retractions when the exponential map is not available in closed form. The algorithm ensures that the corresponding Markov chain moves on the manifold $M$ without requiring a projection, i.e. it preserves the geometrical features of the corresponding stochastic differential equation. 

The main focus is on sampling from, and computing expectation of functions of, a target distribution. As a consequence, we demonstrate how the concept of weak approximations for SDEs in $\R^q$ and the flat torus $\mathbb T^q$, and the corresponding proof techniques in deriving weak error bounds, can be transferred to the case of compact Riemannian manifolds via the covariant Taylor expansion. The PDE-based proof technique enables obtaining weak error bounds that match the optimal bounds in $\mathbb R^q$ and flat manifolds; these are superior to the weak error bound currently available in the literature, which follow by first deriving strong approximation (e.g., convergence in expected squared-distance) error bounds. The price we pay for our approach is in the absence of explicit dimension-dependent \rf{constants} 
in the derived bounds. The Talay-Tubaro extrapolation in Section \ref{sec:TT} may be used to extract more information on the \rf{constants.} 

The approach used here to deriving weak error bounds opens the door to carry over the extensive research (conducted by statistical, machine learning, and the numerical analysis communities) on approximation of SDEs in Euclidean spaces to manifolds. For example, sampling from a distribution with compact support on a non-compact manifold, or from a manifold with boundary, are of considerable interest given their practical relevance. These problems are yet to be considered within current literature; only recently has the former been considered even for $\R^q$ using a reflected diffusion \citep{leimkuhler2023simplerandom}. It is plausible that an intrinsic reflected diffusion can be used to carry out a similar program on manifolds, and this is the subject of our future work. 

In this paper we have only considered explicit Euler-type schemes. In a similar fashion, we can construct and analyze implicit schemes, second-order sampling algorithms, etc. for Langevin diffusion on manifolds using the arsenal of numerical methods developed in the Euclidean space (see e.g. \citep{milstein2004stochastic}), which potentially can lead to more efficient sampling methods.

The proposed algorithm and its analysis are also useful for applications in other research areas, including molecular dynamics, finance, and optimisation.  An important example in financial engineering involves multi-factor stochastic volatility models widely used in option pricing and risk analysis, where stochastic modelling of correlation/covariance between factors entering the asset price and volatility processes are of practical value. There is growing interest \citep{fin1,fin2} in using a Wishart process \citep{Bru91} to model the time-changing covariance structure, which is an example of stochastic process assuming values in the manifold of positive (semi-)definite matrices. 
The examples considered in Section~\ref{sec:numpd} with the intrinsic approach to modelling stochastic dynamics of  positive definite matrices provides a useful, more general class of processes to be used within this context. 

\section*{Acknowledgements}

The authors were supported by EPSRC grant no. EP/X022617/1. KB acknowledges support from grants EPSRC EP/V048104/1, NSF 2015374 and NIH R37-CA21495. AS acknowledges support by the Wallenberg AI, Autonomous Systems and Software Program (WASP) funded by the Knut and Alice Wallenberg Foundation.

\appendix
	
	\section{}
	
	
The covariant Taylor expansion plays a major role in the proofs of theorems, and we review it here. Let $ f \in C^{4}(M)$ taking values in $\mathbb{R}$ and  the curve $c(s)$, $s \in \mathbb{R}$ on $M$, be so that $c(0) = x$ and $\dot c(0) = V $. Introduce $F(s) := f(c(s)) : \mathbb{R} \rightarrow \mathbb{R}$ for which the Taylor expansion can be written as
	\begin{align}\label{eq:taylor series}
		F(s) = F(0) + s\frac{\diff}{\diff s}F(0) + \frac{s^{2}}{2}\frac{\diff^{2}}{\diff s^{2}}F(0) + \frac{s^{3}}{6}\frac{\diff^{3}}{\diff s^{3}}F(0) + \frac{s^{4}}{24}\frac{\diff^{4}}{\diff s^{4}}F(\alpha),
	\end{align}
	where $\alpha \in (0,s)$. We examine each derivative in turn. The first derivative is
 \begin{equation}\label{eq:first derivative}
     \dfrac{\diff}{\diff s}F(s) =  \dfrac{\diff}{\diff s}f(c(s)) = \diff f(c(s))(\dot c(s)).
 \end{equation}
For the second derivative, the geodesic curvature of $c$ is accounted for
\begin{align*}\label{eq:second derivative}
    \dfrac{\diff^2}{\diff s^2}F(s)  &= \dfrac{\diff^2}{\diff s^2}f(c(s)) \\
                                    &= \dfrac{\diff}{\diff s} \diff f(c(s))(\dot c(s))\\
                                    &= D\diff f(c(s))(\dot c(s),\dot c(s)) + \diff f(c(s))(D_{\dot c(s)}\dot c(s)).\numberthis
\end{align*}
Computing the third derivative, we get
\begin{align*}\label{eq:third derivative}
    \dfrac{\diff^3}{\diff s^3}F(s) &= \dfrac{\diff}{\diff s}\bigg(D\diff f(c(s))(\dot c(s),\dot                                     c(s)) + \diff f(c(s))(D_{\dot c(s)}\dot c(s))\bigg)\\
                                   &= D^2\diff f(c(s)) (\dot c(s),\dot c(s),\dot c(s)) + 2D\diff f(c(s))(\dot c(s),D_{\dot c(s)}\dot c(s)) \\
                                   & \;\;\; + D\diff f(c(s))(\dot c(s),D_{\dot c(s)}\dot c(s)) + \diff f(c(s))(D_{\dot c(s)}D_{\dot c(s)}\dot c(s))\\
                                   &=D^2\diff f(c(s)) (\dot c(s),\dot c(s),\dot c(s)) + 3D\diff f(c(s))(\dot c(s),D_{\dot c(s)}\dot c(s)) \\ &  \;\;\;+ \diff f(c(s))(D_{\dot c(s)}D_{\dot c(s)}\dot c(s)),\numberthis
\end{align*}
where we have utilized the fact that second covariant derivative of functions is symmetric $(0,2)-$tensor. 
 Evaluating \eqref{eq:first derivative}, \eqref{eq:second derivative}, \eqref{eq:third derivative} at $s=0$ and substituting them into \eqref{eq:taylor series}, we obtain the following Taylor formula of $f$ along the curve $c$:
\begin{align*}\label{eq:full taylor}
    f(c(s)) &= f(x) + s \diff f(x)(V) + \dfrac{s^2}{2}(D\diff f(x)(V,V) + \diff f(x)(D_V V)) \\
            &\;\;\; + \dfrac{s^3}{6}(D^3 f(x)(V,V,V) + 3D^2f(x)(V,D_V V) + \diff f(x)(D^2_V V))\\
            &\;\;\;+ \dfrac{s^4}{24}\rf{\dfrac{\diff^4}{\diff s^4}f(c(\alpha))},\numberthis
\end{align*}
where we have abbreviated $D^2_V = D_VD_V$.

	\section{
 Proofs of Theorems}
	\label{proof1}

	For brevity, we denote $u(t,x)$ as $u$, $\frac{\partial u}{\partial t}(t,x)$ as $ \frac{\partial u}{\partial t}$, $ \nabla u(t,x) $ as $\nabla u$, $ Du(t,x) $ as $Du$, $ D^ku(t,x) $ as $D^ku$, $\nabla \phi(x)$ as $\nabla \phi$, $g^{-1/2}(x)$ as $g^{-1/2}$ and $u(t_{n}, X_{n}^h)$ as $ u_{n} $, $n =0,\dots,N-1$, where $u(t,x)$ is the solution of \eqref{BKPDE} and $\phi(x)$ is from (\ref{eq:diffusion}). We need the following lemma on one-step error of the algorithm (\ref{eqn_3.5})  to prove Theorem~\ref{theorem4.1}.
	\begin{lemma}[One-step error]\label{onesteplemma}
		Let Assumptions~\ref{assump:g} and \ref{assump:phi} and $\varphi \in C^{4, \epsilon}(M)$ hold. Given $x \in M $, let $X_{1}^h$ be computed according to the following formula:  
		\begin{align}
			X_{1}^h = F_{x}\Big(-\frac{h}{2}\nabla \phi(x) + h^{1/2}g^{-1/2}(x)\xi \Big),
		\end{align}
  where $F_{x}$ is a retraction, and curve $c(s)$  corresponding to this retraction  satisfies Assumption~\ref{assump:2nd order ret} 
with $c(0) = x$, $\dot c(0)=V := -\frac{h^{1/2}}{2}\nabla\phi + g^{-1/2} \xi$ and hence  $F_x(V) = c(\sqrt{h})$.
		Then
		\begin{align}\label{eq:onesteplemma}
			\mathbb{E}[u(t+ h, X_{1}^h) - u(t,x)] \leq C h^{2}e^{-\lambda (T-t)},
		\end{align}
		where $C>0 $ is independent of $T$ and $h$ and it linearly depends on $\Vert \varphi \Vert_{C^{4,\epsilon }(M)}$ but otherwise is independent of $\varphi$. 
	\end{lemma}
	
	\begin{proof}		
		Applying the Taylor expansion to $u(t+h,c(s))$ around $t$, we obtain
		\begin{align}\label{gla_eqn5.5}
			u(t + h, c(s)) = u(t, c(s)) + h \frac{\partial u}{\partial t}(t,c(s)) + \frac{h^{2}}{2}\frac{\partial^{2} u}{\partial t^{2}}(t(\alpha_{1}), c(s)),
		\end{align}
		where $t(\alpha_{1}):= t + \alpha_{1} h \in (t, t + h)$ since $\alpha_{1} \in (0,1)$. 
  
  
		Applying the covariant Taylor formula \eqref{eq:full taylor} to the function $ u(t, c(s))$ along the curve $c$, we get 
		\begin{align*}\label{eqn_taylor4}
			u(t, c(s)) &= u + s \diff u(V) + \dfrac{s^2}{2}(D\diff u(V,V) + \diff u(D_V V)) \\
            &+ \dfrac{s^3}{6}(D^3 u(V,V,V) + 3D^2u(V,D_V V) + \diff u(D^2_V V))\\
            &+ \dfrac{s^4}{24}\rf{\dfrac{\diff^4}{\diff s^4} u(t,c(\alpha_2))},\numberthis
		\end{align*}
		where $\alpha_{2} \in (0, \sqrt{h})$. Similarly we obtain
		\begin{align}\label{eqn_taylor5}
			\frac{\partial u}{\partial t}(t, c(s)) &= \frac{\partial u}{\partial t} + sD\frac{\partial u}{\partial t}(V) \nonumber \\ & \;\;\; + \frac{s^{2}}{2}\bigg(D^{2}\frac{\partial u}{\partial t}(t,c(\alpha_{3}))(\dot c(\alpha_{3}),\dot c(\alpha_{3}))+ \diff \dfrac{\partial u}{\partial t}(t,c(\alpha_3))(D_{\dot c(\alpha_3)}\dot c(\alpha_3))\bigg),
		\end{align}
		where $\alpha_{3} \in  (0, \sqrt{h})$.
		
		Substituting (\ref{eqn_taylor4}) and (\ref{eqn_taylor5}) in (\ref{gla_eqn5.5}), we arrive at
		\begin{align*}
			u(t &+ h, c(s)) = u +  h\frac{\partial u}{\partial t}  + s\diff u(V) + \frac{s^{2}}{2}(D^{2}u(V,V)  + \diff u (D_V V)) \nonumber \\ & + \frac{s^{3}}{6}(D^{3}u(V,V,V)+ 3D^2u(V,D_V V) + \diff u(D^2_V V) )\\
   &+ \rf{\dfrac{s^4}{24}\dfrac{\diff^4}{\diff s^4} u(t,c(\alpha_2))} \\ & + h s D\frac{\partial u}{\partial t}(V) + h\frac{s^{2}}{2}\bigg(D^{2}\frac{\partial u}{\partial t}(t,c(\alpha_{3}))(\dot c(\alpha_{3}),\dot c(\alpha_{3})) + \diff \dfrac{\partial u}{\partial t}(t,c(\alpha_3))(D_{\dot c(\alpha_3)}\dot c(\alpha_3))\bigg)\\
   &+ \frac{h^{2}}{2}\frac{\partial^{2} u}{\partial t^{2}}(t(\alpha_{1}), c(s)). \numberthis
		\end{align*}
  
Taking expectation on both sides and putting $ s = \sqrt{h} $, we get
		\begin{align*}
			\E[u&(t + h, X_1^h)] = u + h\dfrac{\partial u}{\partial t} +\E\bigg[ \sqrt{h}\diff u(V) + \frac{h}{2}(D^{2}u(V,V)  + \diff u (D_V V)) \nonumber \\ & + \frac{h^{3/2}}{6}(D^{3}u(V,V,V)+ 3D^2u(V,D_V V) + \diff u(D^2_V V) )+ h^{3/2} D\frac{\partial u}{\partial t}(V) \\
   &+ \rf{\dfrac{h^2}{24}\dfrac{\diff^4}{\diff s^4} u(t,c(\alpha_2))} \\ & + \frac{h^2}{2}\bigg(D^{2}\frac{\partial u}{\partial t}(t,c(\alpha_{3}))(\dot c(\alpha_{3}),\dot c(\alpha_{3})) + \diff \dfrac{\partial u}{\partial t}(t,c(\alpha_3))(D_{\dot c(\alpha_3)}\dot c(\alpha_3))\bigg)\\
   &+ \frac{h^{2}}{2}\frac{\partial^{2} u}{\partial t^{2}}(t(\alpha_{1}), c(s))\bigg].\numberthis \label{eqntaylor5.9}
		\end{align*}
		Denote $i$-th component of $V$ as $V^{i}$, i.e. $V^{i} = \Big(-\frac{h^{1/2}}{2}\nabla \phi + g^{-1/2}\xi\Big)^{i}$. Using the properties of the random variables  $\xi^{i}$
		\begin{align}
			\E[\xi^{i}]=0,\; \E[\xi^{i}\xi^{j}]=\delta_{ij},\; \E[\xi^{i}\xi^{j}\xi^{k}]=0,\; \E[(\xi^{i})^{2}(\xi^{j})^{2}] = 1 \text{ for } i \neq j, \label{eq:propxi}
		\end{align} 
		we deduce that
		\begin{align*}
			\mathbb{E}(V^{i}V^{j}V^{k}) &= -\frac{h^{3/2}}{8}(\nabla \phi)^{i}(\nabla \phi)^{j} (\nabla \phi)^{k} + \frac{3h}{4}(\nabla \phi)^{i}(\nabla \phi)^{j}\mathbb{E}\big((g^{-1/2}\xi)^{k}\big) \\ & 
			\;\;\; - \frac{3h^{1/2}}{2}(\nabla \phi)^{i}\mathbb{E}\big((g^{-1/2} \xi)^{j}(g^{-1/2}\xi)^{k}\big) + \mathbb{E}\big((g^{-1/2}\xi)^{i}(g^{-1/2}\xi)^{j}(g^{-1/2}\xi)^{k}\big)\\ & 
			= -\frac{h^{3/2}}{8}(\nabla \phi)^{i}(\nabla \phi)^{j} (\nabla \phi)^{k} - \frac{3h^{1/2}}{2}(\nabla \phi)^{i}\mathbb{E}\big((g^{1/2})^{jl}\xi_{l}(g^{1/2})^{km}\xi_{m}\big)\\ & = 
			-\frac{h^{3/2}}{8}(\nabla \phi)^{i}(\nabla \phi)^{j} (\nabla \phi)^{k} - \frac{3h^{1/2}}{2}(\nabla \phi)^{i}(g^{1/2})^{jl}(g^{1/2})^{km}\delta_{lm} \\ &  = -\frac{h^{3/2}}{8}(\nabla \phi)^{i}(\nabla \phi)^{j} (\nabla \phi)^{k} - \frac{3h^{1/2}}{2}(\nabla \phi)^{i}g^{jk},  \numberthis \label{eqntaylor5.10}
		\end{align*}
		where $i,j,k = 1,\dots,q$ and $\delta_{ij}$ is the Kronecker delta. 
		Further, we have
		\begin{align*}
			\mathbb{E}(h^{1/2}Du(V)) &= -\frac{h}{2} g(\nabla \phi, \nabla u),\numberthis \label{eqntaylor5.11}\\
			\mathbb{E}(hD^{2}u(V,V)) &= h\mathbb{E}(\Hess^{u}(g^{-1/2}\xi, g^{-1/2}\xi)) + \frac{h^{2}}{4}D^{2}u(\nabla \phi, \nabla \phi) \\ &= h \mathbb{E}\big(\Hess^{u}_{ij}(g^{-1/2}\xi)^{i} (g^{-1/2}\xi)^{j}\big) + \frac{h^{2}}{4}D^{2}u(\nabla \phi, \nabla \phi)\\ & = h \mathbb{E}\big(\Hess^{u}_{ij}(g^{1/2})^{ik}\xi_{k} (g^{1/2})^{jl}\xi_{l}\big) + \frac{h^{2}}{4}D^{2}u(\nabla \phi, \nabla \phi) \\ & = 
			h \Hess^{u}_{ij}(g^{1/2})^{ik} (g^{1/2})^{jl}\delta_{kl} + \frac{h^{2}}{4}D^{2}u(\nabla \phi, \nabla \phi) \\ & = 
			h \Hess^{u}_{ij}g^{ij}  + \frac{h^{2}}{4}D^{2}u(\nabla \phi, \nabla \phi)\\ & = h \Delta_{M}u  + \frac{h^{2}}{4}D^{2}u(\nabla \phi, \nabla \phi),\numberthis \label{eqntaylor5.12} 
   \end{align*}
   \begin{align*}
			\mathbb{E}\Big(h^{3/2}D^{3}u(V,V,V)\Big) &= h^{3/2}D^{3}u(\partial_{i}, \partial_{j}, \partial_{k})\mathbb{E}(V^{i}V^{j}V^{k}) \\ & = -\frac{h^{3}}{8}D^{3}u(\partial_{i},\partial_{j}, \partial_{k})(\nabla \phi)^{i}(\nabla \phi)^{j} (\nabla \phi)^{k} \\ & \;\;\;\; - \frac{3h^{2}}{2}D^{3}u(\partial_{i},\partial_{j},\partial_{k})(\nabla \phi)^{i}g^{jk}, \numberthis \label{eqntaylor5.13}
			\\ 
			\mathbb{E}\Big(h^{3/2}D\frac{\partial u}{\partial t}(V)\Big) &= h^{3/2}D\frac{\partial u}{\partial t}(\partial_{i})\mathbb{E}(V^{i}) = \frac{h^{2}}{2}D\frac{\partial u}{\partial t}(\partial_{i})   (\nabla \phi)^{i}. \numberthis \label{eqntaylor5.14}
		\end{align*}
		Substituting (\ref{eqntaylor5.11})-(\ref{eqntaylor5.14}) in (\ref{eqntaylor5.9}), we get
		\begin{align*}
			\mathbb{E}(u(t + h, X_{1}^h)) &= u +  h\frac{\partial u}{\partial t}  - \frac{h}{2} g(\nabla \phi, \nabla u) + \frac{h}{2} \Delta_{M}u   \\ 
            & \;\;\;\; + \frac{h^{2}}{8}D^{2}u(\nabla \phi, \nabla \phi) -\frac{h^{3}}{48}D^{3}u(\partial_{i},\partial_{j}, \partial_{k})(\nabla \phi)^{i}(\nabla \phi)^{j} (\nabla \phi)^{k} \\ 
            & \;\;\;\; - \frac{h^{2}}{4}D^{3}u(\partial_{i},\partial_{j},\partial_{k})(\nabla \phi)^{i}g^{jk} + \frac{h^{2}}{2}D\frac{\partial u}{\partial t}(\partial_{i})   (\nabla \phi)^{i}\\ 
            & \;\;\;\;   + \mathbb{E}\bigg(\rf{\dfrac{h^2}{24}\dfrac{\diff^4}{\diff s^4} u(t,c(\alpha_2))}  \\ 
            & \;\;\;\;  + \frac{h^{2}}{2}D^{2}\frac{\partial u}{\partial t}(t, c(\alpha_{3}))(\dot c(\alpha_{3}),\dot c(\alpha_{3}))  + \frac{h^{2}}{2}\frac{\partial^{2} u}{\partial t^{2}}(t(\alpha_{1}), c(s))\\
            &\;\;\;\;+\diff \dfrac{\partial u}{\partial t}(t,c(\alpha_3))(D_{\dot c(\alpha_3)}\dot c(\alpha_3))\bigg) +  \frac{h}{2}\mathbb{E}(\diff u (D_V V)) \\ & \;\;\;\; + \frac{h^{3/2}}{6}\mathbb{E}(\diff u (D_V^{2}V)) + \frac{h^{3/2}}{2}\E( D^2u(V,D_V V)). \numberthis  \label{eqntaylor5.15}
		\end{align*}
		We note that the remainder terms include a linear combination of derivatives of $u$ which can be estimated using (\ref{eq:BKdecay}) together with boundedness of $\nabla \phi$ thanks to compactness of $M$ and Assumptions~\ref{assump:g} and~\ref{assump:phi}. Due to  Assumption~\ref{assump:2nd order ret} and again using Assumptions~~\ref{assump:g} and~\ref{assump:phi}, the last \rf{three} terms are also bounded by $Ce^{-\lambda (T-t)}h^{2}$, where $C$ and $\lambda $ are independent of $h$ and $T$. Consequently, we arrive at (\ref{eq:onesteplemma}).
	\end{proof}
We prove the above lemma under the conditions that retraction satisfies Assumption~\ref{assump:2nd order ret}. If we take $c(s)$ to be a geodesic curve then it is clear that $D_{\dot{c}(s)}\dot{c}(s)|_{s = 0} = 0$ and $D_{\dot{c}(s)}D_{\dot{c}(s)}\dot{c}(s)|_{s = 0} = 0$. 

	We now provide proof of  Theorem~\ref{theorem4.1}.  
	\subsection{Proof of Theorem~\ref{theorem4.1}}
	Note that using \eqref{BKPDE} and \eqref{feynmankazformula}, we can write
		\begin{align*}
			|\mathbb{E}(\varphi(X_{N}^h)) - \mu_\phi(\varphi) | &\leq |\mathbb{E}(\varphi(X_{N}^h)) - \mathbb{E}(\varphi(X(T)))| + | \mathbb{E}(\varphi(X(T))) - \mu_\phi(\varphi)| \\  & =  |\mathbb{E}(u(t_{N}, X_{N}^h)) -  u(0,x_{0})| + | \mathbb{E}(\varphi(X(T))) - \mu_\phi(\varphi)| \\ & = \bigg|\mathbb{E}\bigg(\sum\limits_{n=0}^{N-1}\mathbb{E}\Big(u_{n+1} - u_{n}\;\big|\; X_{n}^h \Big)\bigg)\bigg| + | \mathbb{E}(\varphi(X(T))) - \mu_\phi(\varphi)|.
		\end{align*}
		Using Lemma~\ref{onesteplemma} and (\ref{PA34}), we get
		\begin{align*}
			|\mathbb{E}(\varphi(X_{N}^h)) - \mu_\phi(\varphi) | \leq Ch^2\sum\limits_{n=0}^{N-1}e^{-\lambda(T-t_{n})} + Ce^{-\lambda T} \leq C(h + e^{-\lambda T}),
		\end{align*}
		where $C>0$ is a constant independent of $h$ and $T$ and it linearly depends on $\Vert \varphi \Vert_{C^{4,\epsilon }(M)}$ but otherwise $C, \lambda$ are independent of $\varphi$.  
	$\blacksquare$
	\subsection{Proof of Theorem~\ref{theorem4.2}}\label{proof2}
Let us denote $u_{n} := u(X_{n}^h)$, $\nabla \phi_{n} := \nabla \phi(X_{n}^h)$, $ g^{-1/2}_{n} = g^{-1/2}(X_{n}^h)$ and $V_{n} = -\frac{h^{1/2}}{2}\nabla\phi_{n} + g^{-1/2}_{n} \xi_{n+1} $, where $u(x)$ is the solution of (\ref{PPDE}). Define $\gamma_{V_{n}}$ to be the geodesic with initial conditions 
		\begin{align}
			\gamma_{V_{n}}(0) &= X_{n}^h,    \\
			\dot \gamma_{V_{n}}(0) &= V_{n} := -\frac{h^{1/2}}{2}\nabla\phi_{n} + g^{-1/2}_{n} \xi_{n+1}.
		\end{align}
		Applying the covariant Taylor expansion (\ref{eq:full taylor}) with $ s = \sqrt{h}$ to $u_{n+1}$ around $X_{n}^h$, we obtain
		\begin{align*}
			u_{n+1} &- u_{n} =   h^{1/2}Du_{n}(V_{n}) + \frac{h}{2}( D^{2}u_{n}(V_{n},V_{n})+\diff u_n(D_{V_n} V_n)) \nonumber \\ 
            &  + \frac{h^{3/2}}{6}(D^{3}u_{n}(V_{n},V_{n},V_{n})+3D^2u_n(V_n,D_{V_n}V_n) + \diff u_n(D^2_{V_n}V_n)  ) \\ 
            &+ \frac{h^{2}}{24}\rf{\dfrac{\diff^4}{\diff s^4}u( \gamma_{V_{n}}(\alpha))}, \numberthis \label{gula_eqn_5.20}
		\end{align*}
		where $\alpha \in (0,\sqrt{h})$. Taking expectation conditioned on $X_{n}^h$ and using the fact that $\gamma$ is a geodesic curve, we get
		\begin{align*}
			\mathbb{E}(u_{n+1}\;|\; X_{n}^h) & = u_{n}  - \frac{h}{2} g(\nabla \phi_{n}, \nabla u_{n}) + \frac{h}{2} \Delta_{M}u_{n}  \\ &  + \frac{h^{2}}{8}D^{2}u_{n}(\nabla \phi_{n}, \nabla \phi_{n}) -\frac{h^{3}}{48}D^{3}u_{n}(\partial_{i},\partial_{j}, \partial_{k})(\nabla \phi_{n})^{i}(\nabla \phi_{n})^{j} (\nabla \phi_{n})^{k} \\
			& - \frac{h^{2}}{2}D^{3}u_{n}(\partial_{i},\partial_{j},\partial_{k})(\nabla \phi_{n})^{i}g^{jk}_{n}    \\
			& + \frac{h^{2}}{24} \mathbb{E} [ D^{4}u( \gamma_{V_{n}}(\alpha))(\dot\gamma_{V_{n}}(\alpha), \dot\gamma_{V_{n}}(\alpha),\dot\gamma_{V_{n}}(\alpha),\dot\gamma_{V_{n}}(\alpha)) \;|\; X_{n}^h ],\numberthis  \label{eqn_5.16}
		\end{align*}
		where $\partial_{i}$, $i= 1,\dots, q$, denote basis vectors of $T_{X_{n}^h}M$ and $V_n \in T_{X_n^h}M$. The arguments employed to obtain (\ref{eqn_5.16}) are based on (\ref{eqntaylor5.11})-(\ref{eqntaylor5.13}) except that here we take expectation conditional on $X_{n}^h$ and $u$ represents the solution of the Poisson PDE (\ref{PPDE}). Rearranging (\ref{eqn_5.16}) and using the Poisson PDE (\ref{PPDE}), we arrive at
		\begin{align*}
			h\varphi(X_{n}^h) - h\mu_\phi(\varphi) &=  \mathbb{E}(u_{n+1}|X_{n}^h) - u_{n}  - \frac{h^{2}}{8}D^{2}u_{n}(\nabla \phi_{n}, \nabla \phi_{n}) \\
			&\quad +\frac{h^{3}}{48}D^{3}u_{n}(\partial_{i},\partial_{j}, \partial_{k})(\nabla \phi_{n})^{i}(\nabla \phi_{n})^{j} (\nabla \phi_{n})^{k} \\
			&\quad + \frac{h^{2}}{2}D^{3}u_{n}(\partial_{i},\partial_{j},\partial_{k})(\nabla \phi_{n})^{i}g^{jk}_{n}  \\
			&\quad   - \frac{h^{2}}{24} \mathbb{E} [ D^{4}u( \gamma_{V_{n}}(\alpha))(\dot\gamma_{V_{n}}(\alpha), \dot\gamma_{V_{n}}(\alpha),\dot\gamma_{V_{n}}(\alpha),\dot\gamma_{V_{n}}(\alpha)) \;|\; X_{n}^h ].\numberthis \label{eqn_5.17}
		\end{align*}
		Taking expectation on both sides, summing over $n = 0, \dots, N-1$, and applying (\ref{eq:Pmax}), we ascertain
		\begin{align*}
			\bigg| h\mathbb{E}\sum\limits_{n=0}^{N-1}\varphi(X_{n}^h) - hN\mu_\phi(\varphi)\bigg| \le  |\mathbb{E}(u_{N} - u_{0})|  + Ch^{2}N,
		\end{align*}
		where $C>0$ is independent of $T$ and $h$ and it linearly depends on $\Vert \varphi \Vert_{C^{2,\epsilon }(M)}$ but otherwise it is independent of $\varphi$.  Dividing by $T$ (note that $T =Nh$) gives us the result stated in (\ref{theorem4.2est1}):
		\begin{align*}
			|\mathbb{E} \tilde \mu_{\phi,N}(\varphi)- \mu_\phi(\varphi)| \leq C\Big(h + \frac{1}{T}\Big),
		\end{align*}
		where we have used the fact that $ |u_{N} - u_{0}|$ is bounded uniformly in $x$ due to compactness of the manifold $M$ and $u(x) \in C^{4,\epsilon}(M)$.
		
		To show the bound (\ref{theorem4.2est2}), we start with (\ref{gula_eqn_5.20}) 
		\begin{align*}
			u_{n+1} &- u_{n}=  h\mathcal{A}{u}_{n} +  h^{1/2}Du_{n}(g^{-1/2}_{n}\xi_{n+1})  + \frac{h}{2}\Big(D^{2}u_{n}(V_{n},V_{n}) -   \Delta_{M}u_{n}\Big) \\ & \;\;\; + \frac{h^{3/2}}{6}D^{3}u_{n}(V_{n},V_{n},V_{n})  + \frac{h^{2}}{24}D^{4}u( \gamma_{V_{n}}(\alpha))(\dot\gamma_{V_{n}}(\alpha), \dot\gamma_{V_{n}}(\alpha),\dot\gamma_{V_{n}}(\alpha),\dot\gamma_{V_{n}}(\alpha)). \numberthis  \label{gula_5.23}
		\end{align*}
		We  rearrange (\ref{gula_5.23}), sum over $n=0,\dots, N-1$ and use (\ref{PPDE}) to arrive at
		\begin{align*}
			Nh( \tilde \mu_{\phi,N}(\varphi)- \mu_\phi(\varphi)) &=  u_{N} - u_{0}\\
			& \quad + \sum\limits_{n=0}^{N-1}\bigg(-h^{1/2}Du_{n}(g^{-1/2}_{n}\xi_{n+1})  - \frac{h}{2}\Big(D^{2}u_{n}(V_{n},V_{n}) -   \Delta_{M}u_{n}\Big)\nonumber\\
			&\quad -\frac{h^{3/2}}{6}D^{3}u_{n}(V_{n},V_{n},V_{n}) \\
			&\quad - \frac{h^{2}}{24}D^{4}u( \gamma_{V_{n}}(\alpha))(\dot\gamma_{V_{n}}(\alpha), \dot\gamma_{V_{n}}(\alpha),\dot\gamma_{V_{n}}(\alpha),\dot\gamma_{V_{n}}(\alpha))\bigg). \numberthis 
		\end{align*}
		Squaring both sides and dividing by $T^2 $, we obtain
		\begin{align*}
			( \tilde \mu_{\phi,N}(\varphi)- &\mu_\phi(\varphi))^{2} \leq   \frac{C}{T^{2}}(u_{N} - u_{0})^{2} + \frac{Ch}{T^{2}}\bigg(\sum\limits_{n=0}^{N-1} Du_{n}(g^{-1/2}_{n}\xi_{n+1})\bigg)^{2} \\ & \;\;\; + \frac{CNh^{4}}{T^{2}}\sum\limits_{n=0}^{N-1}(D^2u_{n}(\nabla \phi_{n}, \nabla \phi_{n}))^{2}\\ & \;\;\; + \frac{Ch^{3}}{T^{2}}\bigg(\sum\limits_{n=0}^{N-1} D^2u_{n}(g^{-1/2}_{n}\xi_{n+1}, \nabla \phi_{n})\bigg)^{2} \\ & \;\;\;+ \frac{Ch^{3}}{T^{2}}\bigg(\sum\limits_{n=0}^{N-1} D^2 u_{n}( \nabla \phi_{n}, g^{-1/2}_{n}\xi_{n+1})\bigg)^{2} \\ & \;\;\; + \frac{Ch^{2}}{T^{2}}\bigg(\sum\limits_{n=0}^{N-1}D^{2}u_{n}(g^{-1/2}_{n}\xi_{n+1},g^{-1/2}_{n}\xi_{n+1}) -   \Delta_{M}u_{n}\bigg)^{2} \\
			&\quad + \frac{CNh^{6}}{T^{2}}\sum\limits_{n=0}^{N-1}(D^{3}u_{n}(\nabla \phi_{n},\nabla \phi_{n},\nabla \phi_{n}))^{2}   \\ 
			& \;\;\; + \frac{CNh^{5}}{T^{2}}\sum\limits_{n=0}^{N-1}(D^{3}u_{n}(\partial_{i},\partial_{j},\partial_{k})(\nabla \phi)^{i} (\nabla\phi)^{j} (g^{-1/2}_{n}\xi_{n+1})^{k})^{2} \\ & \;\;\;  + \frac{CNh^{4}}{T^{2}}\sum\limits_{n=0}^{N-1}(D^{3}u_{n}(\partial_{i},\partial_{j},\partial_{k})(\nabla \phi)^{i} (g^{-1/2}_{n}\xi_{n+1})^{j} (g^{-1/2}_{n}\xi_{n+1})^{k})^{2} \\& \;\;\;   + \frac{Ch^{3}}{T^{2}}\bigg(\sum\limits_{n=0}^{N-1}D^{3}u_{n}(\partial_{i},\partial_{j},\partial_{k})(g^{-1/2}_{n}\xi_{n+1})^{i} (g^{-1/2}_{n}\xi_{n+1})^{j} (g^{-1/2}_{n}\xi_{n+1})^{k}\bigg)^{2} \\ & \;\;\; + \frac{CNh^{4}}{T^{2}}\sum\limits_{n=0}^{N-1}\big(D^{4}u( \gamma_{V_{n}}(\alpha))(\dot\gamma_{V_{n}}(\alpha), \dot\gamma_{V_{n}}(\alpha),\dot\gamma_{V_{n}}(\alpha),\dot\gamma_{V_{n}}(\alpha))\big)^{2}. \numberthis \label{gula_eqn5.25}
		\end{align*}
		Using (\ref{eq:propxi}), we get 
		\begin{align*}
			\mathbb{E}\bigg(\sum\limits_{n=0}^{N-1}Du_{n}(g_{n}^{-1/2}&\xi_{n+1})\bigg)^{2} = \sum\limits_{n=0}^{N-1}\mathbb{E}(Du_{n}(g_{n}^{-1/2}\xi_{n+1}))^{2} \\ & \;\;\;+ 2 \sum\limits_{k<n}\mathbb{E}(Du_{k}(g_{k}^{-1/2}\xi_{k+1})Du_{n}(g_{n}^{-1/2}\xi_{n+1})) \\ & = \sum\limits_{n=0}^{N-1}\mathbb{E}(Du_{n}(g_{n}^{-1/2}\xi_{n+1}))^{2}, \numberthis  \label{gula_eqn5.26}
		\end{align*}
		and
		\begin{align*}
			\mathbb{E}\bigg(\sum\limits_{n=0}^{N-1} D^2u_{n}(g^{-1/2}_{n}\xi_{n+1}, \nabla \phi_{n})\bigg)^{2} =  \sum\limits_{n=0}^{N-1} \mathbb{E}(D^2 u_{n}(g^{-1/2}_{n}\xi_{n+1}, \nabla \phi_{n}))^{2}. \numberthis \label{gula_eqn5.27}
		\end{align*}
		Analogously,
		\begin{align*} 
			\mathbb{E}\bigg(\sum\limits_{n=0}^{N-1}D^{3}u_{n}&(\partial_{i},\partial_{j},\partial_{k})(g^{-1/2}_{n}\xi_{n+1})^{i} (g^{-1/2}_{n}\xi_{n+1})^{j} (g^{-1/2}_{n}\xi_{n+1})^{k}\bigg)^{2}\\ &= \sum\limits_{n=0}^{N-1}\mathbb{E}(D^{3}u_{n}(\partial_{i},\partial_{j},\partial_{k})(g^{-1/2}_{n}\xi_{n+1})^{i} (g^{-1/2}_{n}\xi_{n+1})^{j} (g^{-1/2}_{n}\xi_{n+1})^{k})^{2} \numberthis\label{gula_eqn5.28}
		\end{align*}
		Noting that $\mathbb{E} [D^{2}u_{n}(g^{-1/2}_{n}\xi_{n+1},g^{-1/2}_{n}\xi_{n+1}) -   \Delta_{M}u_{n} \;|\; X_{n}^h]=0$, we get
		\begin{align}
			\mathbb{E}\bigg(\sum\limits_{n=0}^{N-1}D^{2}u_{n}(g^{-1/2}_{n}&\xi_{n+1},g^{-1/2}_{n}\xi_{n+1}) -   \Delta_{M}u_{n}\bigg)^{2} \nonumber  \\ & = \sum\limits_{n=0}^{N-1}\mathbb{E}(D^{2}u_{n}(g^{-1/2}_{n}\xi_{n+1},g^{-1/2}_{n}\xi_{n+1}) -   \Delta_{M}u_{n})^{2}. \numberthis \label{gula_eqn5.29}
		\end{align}
		Taking expectation on both sides  of (\ref{gula_eqn5.25}), using (\ref{gula_eqn5.26})-(\ref{gula_eqn5.29}) and applying (\ref{eq:Pmax}), we obtain
		\begin{align*}
			\mathbb{E}( \tilde \mu_{\phi,N}(\varphi)- \mu_\phi(\varphi))^{2} \leq C\Big(h^{2} + \frac{1}{T}\Big),
		\end{align*}
		where $C>0$ is a constant independent of $h$ and $T$ and it linearly depends on $\Vert \varphi \Vert_{C^{2,\epsilon }(M)}$ but otherwise it is independent of $\varphi$.
$\blacksquare$
	
	\subsection{Proof of Theorem~\ref{theorem4.3}}\label{proof3}
	
	Consider  $\varphi \in \mathcal{H}$ (see (\ref{eq:distme})).  Since $\mu_{\phi}^{h}$ is stationary measure of Markov chain (see Section~\ref{section3}), we have  
	\begin{align*}
		\int_{M}\varphi(x) \mu_{\phi}^{h}(\diff x) = \int_{M}\mathbb{E}_{x}\varphi(X_{k}^{h}) \mu_{\phi}^{h}(\diff x),
	\end{align*}
	where $\mathbb{E}_{x} \varphi(X_{k}^{h})$ implies  that Markov chain starts at $x$, i.e. $X_0=x$.
	Therefore, we can write
	\begin{equation*}
		\int_{M} \varphi(x)\mu_{\phi}^{h}(\diff x) = \int_{M}\frac{1}{N}\sum\limits_{k=1}^{N}\mathbb{E}_{x}\varphi(X_{k}^{h})\mu_{\phi}^{h}(\diff x).
	\end{equation*}
	Using Theorem~\ref{theorem4.2}, we get
	\begin{align*}
		\bigg| \int_{M}\varphi(x)\big(\mu_{\phi}^{h}(\diff x) - 
		\mu(\diff x)\big)\bigg|  = \bigg|\int_{M}\bigg(\frac{1}{N}\sum\limits_{k=1}^{N}\mathbb{E}_{x}\varphi(X_{k}^{h}) - \mu_\phi(\varphi)\bigg)\mu_{\phi}^{h}(\diff x)\bigg| \leq K\bigg(h + \frac{1}{Nh}\bigg),
	\end{align*}
	where $K>0 $ is independent of $h$, $N$ and $\varphi$ from $\mathcal{H}$ (the latter thanks to the constant $C$ in Theorem~\ref{theorem4.2} being dependent on $\varphi$ only via linear dependence on $\Vert \varphi \Vert_{C^{2,\epsilon }(M)}$ which is bounded for the class $\mathcal{H}$ of functions $\varphi$). Letting $N \rightarrow \infty$, we get the desired result.
$\blacksquare$	

\bibliographystyle{plainnat}
\bibliography{references}

\end{document}